\numberwithin{equation}{section}
\newtheorem{theorem}{Theorem}[section]
\newtheorem{lemma}[theorem]{Lemma}
\newtheorem{corollary}[theorem]{Corollary}
\newtheorem{definition}[theorem]{Definition}
\title{Positive solution of Hilfer fractional differential equations with integral boundary
conditions}
\author{
  Mohammed A. Malahi\thanks{ M.A. Malahi, aboosama736242107@gmail.com.} \\
 Department of Mathematics\\
 Dr.Babasaheb Ambedkar Marathwada University\\
 Aurangabad, (M.S),431001, India\\
  \texttt{aboosama736242107@gmail.com} \\
   \And
Mohammed S. Abdo$^1$$^,$$^2$\\
  $^1$Department of Mathematics\\
 Dr.Babasaheb Ambedkar Marathwada University\\
 Aurangabad, (M.S),431001, India\\
 $^2$Hodeidah University\\
 Al-Hodeidah, 3114, Yemen\\
  \texttt{msabdo1977@gmail.com} \\
   \AND
  Satish K. Panchal\\
  Department of Mathematics\\
 Dr.Babasaheb Ambedkar Marathwada University\\
 Aurangabad, (M.S),431001, India\\
  \texttt{drpanchalsk@gmail.com} \\
}
\begin{document}
\maketitle

\begin{abstract}
In this article, we have interested the study of the existence and
uniqueness of positive solutions of the first-order nonlinear Hilfer
fractional differential equation
\begin{equation*}
D_{0^{+}}^{\alpha ,\beta }y(t)=f(t,y(t)),\text{ }0<t\leq 1,
\end{equation*}%
with the integral boundary condition$\ $%
\begin{equation*}
I_{0^{+}}^{1-\gamma }y(0)=\lambda \int_{0}^{1}y(s)ds+d,
\end{equation*}%
where $0<\alpha \leq 1,$ $0\leq \beta \leq 1,$ $\lambda \geq 0,$ $d\in
\mathbb{R}
^{+},$ and $D_{0^{+}}^{\alpha ,\beta }$, $I_{0^{+}}^{1-\gamma }$ are
fractional operators in the Hilfer, Riemann-Liouville concepts,
respectively. In this approach, we transform the given fractional
differential equation into an equivalent integral equation. Then we
establish sufficient conditions and employ the Schauder fixed point theorem
and the method of upper and lower solutions to obtain the existence of a
positive solution of a given problem. We also use the Banach contraction
principle theorem to show the existence of a unique positive solution. \ The
result of existence obtained by structure the upper and lower control
functions of the nonlinear term is without any monotonous conditions.
Finally, an example is presented to show the effectiveness of our main
results.
\end{abstract}

\keywords{fractional differential equations \and positive solution \and upper and lower solutions \and fixed point theorem \and existence and uniqueness}

\section{Introduction}

Fractional differential equations have high significance due to their
application in many fields such as applied and engineering sciences, etc. In
the recent years, there has been a significant development in ordinary and
partial differential equations involving fractional derivatives, see the
monographs of Kilbas et al.\cite{KL1}, Miller and Ross \cite{MR}, Podlubny%
\cite{PO}, Hilfer \cite{HI} and referance therein. In particular, many
interesting results of the existence of positive solutions of nonlinear
fractional differential equations have been discussed, see \cite%
{AWP,AD,BAL,LLH,NC,WLK} and referance therein. The integral boundary
conditions have various applications in applied fields such as, underground
water flow, blood flow problems, thermo-elasticity, population dynamics,
chemical engineer-ing, and so forth. Since only positive solutions are
useful for many applications. For example, Abdo et al in \cite{AP1}
discussed the existence and uniqueness of a positive solution for the
nonlinear fractional differential equations with integral boundary condition
of the form
\begin{equation*}
^{c}D_{0^{+}}^{\alpha }y(t)=f(t,y(t)),\text{\qquad\ \ \qquad }t\in \left[ 0,1%
\right]
\end{equation*}%
\begin{equation*}
y(0)=\lambda \int_{0}^{1}y(s)ds+d,\text{ \ \ }\lambda \geq 0,d\in
\mathbb{R}
^{+},
\end{equation*}

where $^{c}D_{0^{+}}^{\alpha }$ is the Caputo fractional derivative of order
$\alpha \in \left( 0,1\right) ,$ and$\ f\ $satisfied some appropriate
assumptions.

Ardjouni and Djoudi in \cite{ARD}, discussed the existence and uniqueness of
a positive solution for the nonlinear fractional differential equations
\begin{equation*}
D_{1^{+}}^{\alpha }x(t)=f(t,x(t)),\text{ \ \ }t\in \left[ 1,e\right]
\end{equation*}%
\begin{equation*}
x(1)=\lambda \int_{1}^{e}x(s)ds+d,\text{ \ \ \ \ \ \ \ \ \ \ \ }
\end{equation*}

where $D_{1^{+}}^{\alpha }$ is the Caputo-Hadamard fractional derivative of
order $\alpha \in \left( 0,1\right) $ , $\lambda \geq 0,d\in
\mathbb{R}
^{+},$and $f$ satisfies some suitable hypotheses. On the other hand, Long et
al. \cite{LLH} investigated some existence of positive solutions of period
boundary value problems of fractional differential equations%
\begin{equation*}
\left\{
\begin{array}{c}
D_{0^{+}}^{\alpha ,\beta }x(t)=\lambda x(t)+f(t,x(t))\text{, \ \ \ \qquad }%
t\in \left( 0,b\right] \\
\underset{t\longrightarrow 0^{+}}{\lim }t^{1-\gamma }x(t)=\underset{%
t\longrightarrow b^{-}}{\lim }t^{1-\gamma }x(0),\text{ }\gamma =\alpha
+\beta -\alpha \beta%
\end{array}%
\right.
\end{equation*}

where $\lambda <0,$ $D_{0^{+}}^{\alpha ,\beta }$ is the Hilfer fractional
derivative of order $\alpha \in \left( 0,1\right) $ and type $\beta \in
\lbrack 0,1]$ and$\ f\ $satisfied some appropriate conditions.

Motivated by the above works, in this paper, we discuss the existence and
uniqueness of positive solution of the following nonlinear Hilfer fractional
differential equations with integral boundary condition in a weighted space
of continuous functions

\begin{equation}
D_{0^{+}}^{\alpha ,\beta }y(t)=f(t,y(t)),\text{ \ }0<t\leq 1  \label{equ 1}
\end{equation}%
\begin{equation}
I_{0^{+}}^{1-\gamma }y(0)=\lambda \int_{0}^{1}y(s)ds+d,\text{ \ \ }
\label{equ 2}
\end{equation}

where $D_{0^{+}}^{\alpha ,\beta }$ is the left-sided Hilfer fractional
derivative of order $\alpha \in \left( 0,1\right) $ of type\ $\beta \in %
\left[ 0,1\right] $, $\lambda \geq 0,$ $d\in
\mathbb{R}
^{+}$ and $f:\left[ 0,1\right] \times
\mathbb{R}
^{+}\longrightarrow
\mathbb{R}
^{+}$ is a continuous, $I_{0^{+}}^{1-\gamma }$ is the Riemann--Liouville
fractional integral of order $1-\gamma ,$ with $\gamma =\alpha +\beta
(1-\alpha )$. The Hilfer fractional derivative can be regarded as an
interpolator between the Riemann--Liouville derivative $(\beta =0)$ and
Caputo derivative $(\beta =1)$. Furthermore, there are studies addressed the
given problem in cases of $\beta =0,1$, however, to the best of our
knowledge, there are no results of the Hilfer problem (\ref{equ 1})-(\ref%
{equ 2}), hence, our article aims to fill this gap.

This article is constructed as follows: In Section \ref{Sec2}, we recall
some concepts which will be useful throughout this article. Section \ref%
{Sec3} contains certain sufficient conditions to establish the existence
criterions of positive solution by using the Schauder fixed point theorem
and the technique of upper and lower solutions. Section \ref{Sec4}
demonstrates the uniqueness of the positive solution by using the Banach
contraction principle. We are given an example in last section.

\section{\textbf{Preliminaries\label{Sec2}}}

Let $C_{1-\gamma }\left[ 0,1\right] $ be a weighted space of all continuous
function defined on the intervel $(0,1],$ such that%
\begin{equation*}
C_{1-\gamma }\left[ 0,1\right] =\left\{ y:(0,1]\rightarrow
\mathbb{R}
^{+};\text{ }t^{1-\gamma }y(t)\in C\left[ 0,1\right] \right\} ,0\leq \gamma
\leq 1
\end{equation*}%
with the norm
\begin{equation*}
\left\Vert y\right\Vert _{c_{1-\gamma \left[ 0,1\right] }}=\underset{t\in %
\left[ 0,1\right] }{\max }\left\vert t^{1-\gamma }y(t)\right\vert .
\end{equation*}%
It is clear that $C_{1-\gamma }\left( \left[ 0,1\right] ,%
\mathbb{R}
^{+}\right) $ is Banach space with the above norm. Define the cone $\Omega
\subset C_{1-\gamma }\left[ 0,1\right] $ by
\begin{equation*}
\Omega =\left\{ y(t)\in C_{1-\gamma }\left[ 0,1\right] :y(t)\geq 0,\text{ }%
t\in (0,1]\right\} .
\end{equation*}

\begin{definition}
\label{def 11} The left-sided Riemann-Liouville fractional integral of order
$\alpha >0$ with the lower limit zero for a function $y:%
\mathbb{R}
^{+}\longrightarrow
\mathbb{R}
$ is defined by%
\begin{equation*}
(I_{0^{+}}^{\alpha }y)(t)=\frac{1}{\Gamma (\alpha )}\int_{0}^{t}(t-s)^{%
\alpha -1}y(s)ds,
\end{equation*}%
provided the right-hand side is pointwise on $%
\mathbb{R}
^{+}$, where $\Gamma $ is the gamma function.
\end{definition}

\begin{definition}
\label{def 22} The left-sided Riemann-Liouville fractional derivative of
order $0<\alpha <1$ with the lower limit zero of a function $y:%
\mathbb{R}
^{+}\longrightarrow
\mathbb{R}
$ is defined by%
\begin{equation*}
D_{0^{+}}^{\alpha }y(t)=\frac{1}{\Gamma (1-\alpha )}\frac{d}{dt}%
\int_{0}^{t}(t-s)^{\alpha -1}y(s)ds.
\end{equation*}%
provided the right-hand side is pointwise on $%
\mathbb{R}
^{+}$.
\end{definition}

\begin{definition}
\label{def 22 copy(1)} The left-sided Caputo fractional derivative of order $%
0<\alpha <1$ with the lower limit zero of a function $y:%
\mathbb{R}
^{+}\longrightarrow
\mathbb{R}
$ is given by%
\begin{equation*}
^{c}D_{0^{+}}^{\alpha }y(t)=\frac{1}{\Gamma (1-\alpha )}\int_{0}^{t}(t-s)^{%
\alpha -1}y^{\prime }(s)ds.
\end{equation*}%
provided the right-hand side is pointwise on $%
\mathbb{R}
^{+}$.
\end{definition}

\begin{definition}
\label{def34} \cite{FK} The left-sided Hilfer fractional derivative of order
$0<\alpha <1$ and type $0\leq \beta \leq 1$ with the lower limit zero of a
function $y:%
\mathbb{R}
^{+}\longrightarrow
\mathbb{R}
$ is given by
\begin{equation*}
D_{0^{+}}^{\alpha ,\beta }y(t)=I_{0^{+}}^{\beta (1-\alpha
)}DI_{0^{+}}^{(1-\beta )(1-\alpha )}y(t),
\end{equation*}%
where $D=\frac{d}{dt}.$ One has,
\begin{equation}
D_{0^{+}}^{\alpha ,\beta }y(t)=I_{0^{+}}^{\beta (1-\alpha
)}D_{0^{+}}^{\gamma }y(t),  \label{R1}
\end{equation}%
where
\begin{equation*}
D_{0^{+}}^{\gamma }y(t)=DI_{0^{+}}^{1-\gamma }y(t),\text{ }\gamma =\alpha
+\beta (1-\alpha ).
\end{equation*}%
In the forthcoming analysis, we need the following spaces:
\begin{equation*}
C_{1-\gamma }^{\alpha ,\beta }[0,1]=\left\{ y\in C_{1-\gamma
}[0,1]:D_{0^{+}}^{\alpha ,\beta }y\in C_{1-\gamma }[0,1]\right\} ,
\end{equation*}%
and%
\begin{equation}
C_{1-\gamma }^{\gamma }[0,1]=\left\{ y\in C_{1-\gamma
}[0,1]:D_{0^{+}}^{\gamma }y\in C_{1-\gamma }[0,1]\right\} .  \label{a1}
\end{equation}%
Since $D_{0^{+}}^{\alpha ,\beta }y=I_{0^{+}}^{\beta (1-\alpha
)}D_{0^{+}}^{\gamma }y$, it is obvious that $C_{1-\gamma }^{\gamma
}[0,1]\subset C_{1-\gamma }^{\alpha ,\beta }[0,1].$
\end{definition}

\begin{lemma}
\label{def8.8} \cite{AP1} Let $\alpha >0$, $\beta >0$ and $\gamma =\alpha
+\beta -\alpha \beta .$ If $y\in C_{1-\gamma }^{\gamma }[0,1]$, then\newline
\begin{equation*}
I_{0^{+}}^{\gamma }D_{0^{+}}^{\gamma }y=I_{0^{+}}^{\alpha }\text{ }%
D_{0^{+}}^{\alpha ,\beta }y,
\end{equation*}%
and
\begin{equation*}
D_{0^{+}}^{\gamma }I_{0^{+}}^{\alpha }y=D_{0^{+}}^{\beta (1-\alpha )}y.
\end{equation*}
\end{lemma}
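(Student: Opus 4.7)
The plan is to derive both identities by combining the factorization $D_{0^{+}}^{\alpha,\beta} = I_{0^{+}}^{\beta(1-\alpha)} D_{0^{+}}^{\gamma}$ recorded in (\ref{R1}) with the definition $D_{0^{+}}^{\gamma} = D\, I_{0^{+}}^{1-\gamma}$ and the semigroup property of Riemann--Liouville integrals, $I_{0^{+}}^{a} I_{0^{+}}^{b} = I_{0^{+}}^{a+b}$. The arithmetic identity $\gamma = \alpha + \beta(1-\alpha)$ is the glue that makes the exponents line up.

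For the first equality, I would apply $I_{0^{+}}^{\alpha}$ to both sides of (\ref{R1}) to get $I_{0^{+}}^{\alpha} D_{0^{+}}^{\alpha,\beta} y = I_{0^{+}}^{\alpha} I_{0^{+}}^{\beta(1-\alpha)} D_{0^{+}}^{\gamma} y$, and then collapse the composition on the right using the semigroup property to obtain $I_{0^{+}}^{\alpha+\beta(1-\alpha)} D_{0^{+}}^{\gamma} y = I_{0^{+}}^{\gamma} D_{0^{+}}^{\gamma} y$. The fact that $y \in C_{1-\gamma}^{\gamma}[0,1]$ ensures $D_{0^{+}}^{\gamma} y \in C_{1-\gamma}[0,1]$, which is the regularity needed for Fubini to justify the semigroup composition.

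For the second equality, I would expand $D_{0^{+}}^{\gamma} I_{0^{+}}^{\alpha} y = D\, I_{0^{+}}^{1-\gamma} I_{0^{+}}^{\alpha} y$ using the definition of $D_{0^{+}}^{\gamma}$, apply the semigroup property to get $D\, I_{0^{+}}^{1-\gamma+\alpha} y$, and then use the arithmetic check
\begin{equation*}
1 - \gamma + \alpha = 1 - (\alpha + \beta - \alpha\beta) + \alpha = 1 - \beta(1-\alpha)
\end{equation*}
to rewrite this as $D\, I_{0^{+}}^{1-\beta(1-\alpha)} y$, which is exactly $D_{0^{+}}^{\beta(1-\alpha)} y$ in the Riemann--Liouville sense of Definition \ref{def 22}.

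The main obstacle, and the reason the hypothesis $y \in C_{1-\gamma}^{\gamma}[0,1]$ is made explicit, is ensuring that the semigroup law and the interchange of $D$ with the integral are legitimate on a space where functions are allowed to blow up like $t^{\gamma-1}$ at $0$. The space $C_{1-\gamma}^{\gamma}[0,1]$ is precisely tailored so that $I_{0^{+}}^{1-\gamma} y$ is absolutely continuous (so the classical derivative $D\, I_{0^{+}}^{1-\gamma} y$ makes sense pointwise) and the iterated integrals defining $I^a I^b$ converge; once this is checked, the rest of the argument is a direct chain of substitutions.
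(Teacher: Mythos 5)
Your derivation is correct, and it is the standard argument. Note, however, that the paper itself offers no proof of this lemma: it is stated with a citation to \cite{AP1}, so there is no in-paper argument to compare against. Your two chains of substitutions --- applying $I_{0^{+}}^{\alpha}$ to the factorization $D_{0^{+}}^{\alpha,\beta}=I_{0^{+}}^{\beta(1-\alpha)}D_{0^{+}}^{\gamma}$ and collapsing $I_{0^{+}}^{\alpha}I_{0^{+}}^{\beta(1-\alpha)}=I_{0^{+}}^{\gamma}$ for the first identity, and expanding $D_{0^{+}}^{\gamma}I_{0^{+}}^{\alpha}=D\,I_{0^{+}}^{1-\gamma+\alpha}=D\,I_{0^{+}}^{1-\beta(1-\alpha)}$ for the second --- are exactly how the cited reference establishes the result, and the exponent arithmetic $1-\gamma+\alpha=1-\beta(1-\alpha)$ checks out. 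Two minor points worth flagging: the semigroup property you invoke is stated in the paper (Theorem \ref{th2.3'}) only for $y\in C_{\gamma}^{1}[0,1]$, whereas you need it for $D_{0^{+}}^{\gamma}y\in C_{1-\gamma}[0,1]$; the stronger standard version valid on weighted continuous (indeed $L^{1}$) spaces is what is actually required, so you should cite that form rather than the paper's. Also, the second identity silently assumes $0<\beta(1-\alpha)<1$ so that Definition \ref{def 22} applies; the degenerate case $\beta(1-\alpha)=0$ (where $D_{0^{+}}^{0}$ is the identity) should be handled separately, though it is trivial.
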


\begin{theorem}
\label{th2.3} \cite{FK} Let $y\in C_{\gamma }[0,1],$ $0<\alpha <1$, and $%
0\leq \gamma <1$. Then%
\begin{equation*}
D_{0^{+}}^{\alpha }I_{0^{+}}^{\alpha }y(t)=y(t),\text{ }\forall t\in (0,1].
\end{equation*}%
Moreover, if $y\in C_{\gamma }[0,1]$ and $I_{0^{+}}^{1-\beta (1-\alpha
)}y\in C_{\gamma }^{1}[0,1],$then
\begin{equation*}
D_{0^{+}}^{\alpha ,\beta }I_{0^{+}}^{\alpha }y(t)=y(t),\text{ for a.e. }t\in
(0,1].
\end{equation*}
\end{theorem}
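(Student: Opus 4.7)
The plan is to reduce each identity to the elementary formula $\tfrac{d}{dt}\int_0^t y(s)\,ds = y(t)$ by unfolding the definitions of $D^{\alpha}_{0^{+}}$ and $D^{\alpha,\beta}_{0^{+}}$ and telescoping compositions via the Riemann--Liouville semigroup law $I^{\mu}_{0^{+}} I^{\nu}_{0^{+}} = I^{\mu+\nu}_{0^{+}}$.

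For the first identity, I would unfold $D^{\alpha}_{0^{+}} = \tfrac{d}{dt}\,I^{1-\alpha}_{0^{+}}$ from Definition~\ref{def 22} and chain the integrals:
\begin{equation*}
D^{\alpha}_{0^{+}} I^{\alpha}_{0^{+}} y(t) \;=\; \tfrac{d}{dt}\bigl(I^{1-\alpha}_{0^{+}} I^{\alpha}_{0^{+}} y\bigr)(t) \;=\; \tfrac{d}{dt}\, I^{1}_{0^{+}} y(t) \;=\; y(t),
\end{equation*}
where the last step holds pointwise on $(0,1]$ by the fundamental theorem of calculus, since $y$ is continuous there. The only technical check is that $I^{\alpha}_{0^{+}} y$ lives in a weighted space on which the subsequent differentiation is legitimate, which is standard for $y\in C_{\gamma}[0,1]$.

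For the second identity, I would apply the representation (\ref{R1}), namely $D^{\alpha,\beta}_{0^{+}} = I^{\beta(1-\alpha)}_{0^{+}} D^{\gamma}_{0^{+}}$, and invoke the second identity of Lemma~\ref{def8.8} to compute $D^{\gamma}_{0^{+}} I^{\alpha}_{0^{+}} y = D^{\beta(1-\alpha)}_{0^{+}} y$, yielding
\begin{equation*}
D^{\alpha,\beta}_{0^{+}} I^{\alpha}_{0^{+}} y(t) \;=\; I^{\beta(1-\alpha)}_{0^{+}} D^{\beta(1-\alpha)}_{0^{+}} y(t).
\end{equation*}
Writing $\mu := \beta(1-\alpha)$ and unfolding $D^{\mu}_{0^{+}} = \tfrac{d}{dt}\,I^{1-\mu}_{0^{+}}$, the regularity hypothesis $I^{1-\mu}_{0^{+}} y \in C^{1}_{\gamma}[0,1]$ promotes the inner derivative to a classical one; a standard integration-by-parts identity then moves $I^{\mu}_{0^{+}}$ past the derivative, the semigroup law collapses $I^{\mu}_{0^{+}} I^{1-\mu}_{0^{+}} = I^{1}_{0^{+}}$, and one more differentiation returns $y$ for almost every $t\in(0,1]$.

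The main obstacle will be controlling the boundary contribution that appears when $I^{\mu}_{0^{+}}$ is moved past the derivative in Part~2, since the integration by parts produces a correction proportional to $(I^{1-\mu}_{0^{+}} y)(0^{+})\,t^{\mu-1}$. A direct estimate from $y\in C_{\gamma}$ alone only yields $I^{1-\mu}_{0^{+}} y(t) = O\bigl(t^{(1-\alpha)(1-2\beta)}\bigr)$, whose exponent is positive exactly when $\beta < 1/2$; outside this regime, the vanishing of the boundary value must be extracted from the stronger structural hypothesis $I^{1-\mu}_{0^{+}} y \in C^{1}_{\gamma}[0,1]$, which is the precise role played by this extra assumption in the statement.
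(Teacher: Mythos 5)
The paper itself offers no proof of this statement: Theorem~\ref{th2.3} is imported verbatim from Furati and Kassim \cite{FK}, so there is no in-paper argument to compare yours against, and your proposal must stand on its own. Your first part does: $D_{0^{+}}^{\alpha}=\frac{d}{dt}I_{0^{+}}^{1-\alpha}$ (note this is the correct form of Definition~\ref{def 22}, whose kernel should read $(t-s)^{-\alpha}$ rather than $(t-s)^{\alpha-1}$), the semigroup law gives $I_{0^{+}}^{1-\alpha}I_{0^{+}}^{\alpha}y=I_{0^{+}}^{1}y$, and differentiating returns $y$ on $(0,1]$ by continuity of $y$ there. That is the standard argument and it is fine.

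The second part has a genuine gap, located exactly where you flag the ``main obstacle.'' Your reduction is correct: unfolding Definition~\ref{def34} and using the semigroup law (or, equivalently, Lemma~\ref{def8.8}) gives, with $\mu:=\beta(1-\alpha)$,
\begin{equation*}
D_{0^{+}}^{\alpha,\beta}I_{0^{+}}^{\alpha}y=I_{0^{+}}^{\mu}\tfrac{d}{dt}\,I_{0^{+}}^{(1-\beta)(1-\alpha)+\alpha}y=I_{0^{+}}^{\mu}D_{0^{+}}^{\mu}y,
\end{equation*}
and your integration by parts is precisely Lemma~\ref{th2.4}, which yields $y(t)-\frac{I_{0^{+}}^{1-\mu}y(0)}{\Gamma(\mu)}\,t^{\mu-1}$. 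But your closing claim --- that the vanishing of $I_{0^{+}}^{1-\mu}y(0^{+})$ can be ``extracted from'' the hypothesis $I_{0^{+}}^{1-\mu}y\in C_{\gamma}^{1}[0,1]$ --- cannot be carried out: that hypothesis guarantees the limit exists, not that it equals zero. Concretely, for $\beta>0$ take $y(t)=t^{\mu-1}$ and any weight $\gamma\in[1-\mu,1)$; then $y\in C_{\gamma}[0,1]$ and $I_{0^{+}}^{1-\mu}y\equiv\Gamma(\mu)$ lies in $C_{\gamma}^{1}[0,1]$, yet $I_{0^{+}}^{\alpha}y=\frac{\Gamma(\mu)}{\Gamma(\alpha+\mu)}t^{\alpha+\mu-1}$ is annihilated by $D_{0^{+}}^{\alpha,\beta}$, so $D_{0^{+}}^{\alpha,\beta}I_{0^{+}}^{\alpha}y=0\neq y$. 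What your argument actually establishes from the stated hypotheses is $D_{0^{+}}^{\alpha,\beta}I_{0^{+}}^{\alpha}y=y-\frac{I_{0^{+}}^{1-\mu}y(0)}{\Gamma(\mu)}t^{\mu-1}$, which collapses to $y$ only under the extra condition $I_{0^{+}}^{1-\mu}y(0)=0$; by Lemma~\ref{th2.5} this is automatic when $\gamma<1-\mu$, i.e.\ (for $\gamma=\alpha+\beta(1-\alpha)$) exactly when $\beta<1/2$, consistent with your exponent count $(1-\alpha)(1-2\beta)$. So the obstacle you identified is not a technicality to be absorbed but a missing hypothesis; the defect lies as much in the transcribed statement as in your proof, but as written the final step does not go through.
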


\begin{theorem}
\label{th2.3'} \cite{FK} Let $\alpha ,\beta \geq 0$ and $y\in C_{\gamma
}^{1}[0,1],$ $0<\alpha <1$, and $0\leq \gamma <1$. Then
\begin{equation*}
I_{0^{+}}^{\alpha }I_{0^{+}}^{\beta }y(t)=I_{0^{+}}^{\alpha +\beta }y(t).
\end{equation*}
\end{theorem}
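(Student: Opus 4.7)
The plan is to prove the semigroup property $I_{0^+}^{\alpha} I_{0^+}^{\beta} y(t) = I_{0^+}^{\alpha+\beta} y(t)$ by direct computation, unfolding the two iterated Riemann--Liouville integrals, swapping the order of integration via Fubini, and recognizing the resulting inner integral as a Beta function. First I would handle the degenerate cases $\alpha = 0$ or $\beta = 0$ separately, since then $I_{0^+}^{0}$ acts as the identity and the identity is trivial; thus I may assume $\alpha, \beta > 0$.

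For the nontrivial case, I would write
\begin{equation*}
I_{0^{+}}^{\alpha}I_{0^{+}}^{\beta}y(t) = \frac{1}{\Gamma(\alpha)\Gamma(\beta)}\int_{0}^{t}(t-s)^{\alpha-1}\int_{0}^{s}(s-\tau)^{\beta-1}y(\tau)\,d\tau\,ds.
\end{equation*}
The key step is to justify applying Fubini's theorem to exchange the order of integration. This is exactly where the hypothesis $y \in C_\gamma^1[0,1]$ (together with $0 \le \gamma < 1$) enters: the weighted continuity ensures $|y(\tau)| \le M \tau^{-\gamma}$ on $(0,t]$, so the combined integrand $(t-s)^{\alpha-1}(s-\tau)^{\beta-1}|y(\tau)|$ is absolutely integrable on the triangle $\{0 < \tau < s < t\}$ because the product of the three singularities $s^{-\gamma}$, $(s-\tau)^{\beta-1}$, $(t-s)^{\alpha-1}$ remains integrable under the given ranges of $\alpha,\beta,\gamma$. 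After swapping, the expression becomes
\begin{equation*}
\frac{1}{\Gamma(\alpha)\Gamma(\beta)}\int_{0}^{t}y(\tau)\left(\int_{\tau}^{t}(t-s)^{\alpha-1}(s-\tau)^{\beta-1}ds\right)d\tau.
\end{equation*}

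Next I would reduce the inner integral to a Beta integral via the substitution $s = \tau + u(t-\tau)$, $u \in [0,1]$, which yields
\begin{equation*}
\int_{\tau}^{t}(t-s)^{\alpha-1}(s-\tau)^{\beta-1}ds = (t-\tau)^{\alpha+\beta-1}\int_{0}^{1}(1-u)^{\alpha-1}u^{\beta-1}du = (t-\tau)^{\alpha+\beta-1}B(\alpha,\beta),
\end{equation*}
and then use the standard identity $B(\alpha,\beta) = \Gamma(\alpha)\Gamma(\beta)/\Gamma(\alpha+\beta)$. Plugging back, the Gamma factors cancel and the expression collapses to
\begin{equation*}
\frac{1}{\Gamma(\alpha+\beta)}\int_{0}^{t}(t-\tau)^{\alpha+\beta-1}y(\tau)\,d\tau = I_{0^{+}}^{\alpha+\beta}y(t),
\end{equation*}
which is the desired identity.

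The main obstacle I anticipate is the rigorous verification of Fubini's hypothesis given only the weighted continuity; the bound $|y(\tau)| \le M\tau^{-\gamma}$ must be combined with an explicit integrability estimate on the triangular domain, which one can verify by performing the change of variables first and then checking that the resulting majorant is integrable thanks to $\gamma < 1$, $\alpha > 0$, $\beta > 0$. Everything else is formal manipulation with the Beta and Gamma functions.
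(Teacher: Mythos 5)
Your argument is correct: it is the standard proof of the semigroup property via Dirichlet's formula, i.e.\ unfolding the iterated integrals, justifying the interchange of order with the weighted bound $|y(\tau)|\leq M\tau^{-\gamma}$, and evaluating the inner integral as a Beta function. Note, however, that the paper does not prove this statement at all --- it is imported as a known preliminary from Furati--Kassim \cite{FK} (see also Kilbas et al.\ \cite{KL1}) --- so there is no in-paper proof to compare against; your write-up supplies the omitted standard argument. Two minor observations: the hypothesis $y\in C_{\gamma}^{1}[0,1]$ is stronger than you actually need (membership in $C_{\gamma}[0,1]$, or even $L^{1}$, already suffices for the Fubini step, and your proof only ever uses the bound $|y(\tau)|\leq M\tau^{-\gamma}$ with $\gamma<1$), and your absolute-integrability check reduces cleanly to the finiteness of $B(\alpha,\beta)\int_{0}^{t}\tau^{-\gamma}(t-\tau)^{\alpha+\beta-1}\,d\tau$, which is worth stating explicitly since it is the only place the restriction $\gamma<1$ is used.
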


\begin{lemma}
\label{th2.3a} \cite{KL1} Let $\alpha \geq 0,$ and $\sigma >0$. Then
\end{lemma}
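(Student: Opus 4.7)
Based on the hypotheses ($\alpha \geq 0$, $\sigma > 0$) and the reference to Kilbas et al., I expect the statement to be the standard power-function identity $I_{0^+}^{\alpha} t^{\sigma-1} = \frac{\Gamma(\sigma)}{\Gamma(\sigma+\alpha)} t^{\sigma+\alpha-1}$ (and possibly its Riemann--Liouville derivative companion $D_{0^+}^{\alpha} t^{\sigma-1} = \frac{\Gamma(\sigma)}{\Gamma(\sigma-\alpha)} t^{\sigma-\alpha-1}$). The plan is to compute the integral directly from Definition \ref{def 11} and reduce it to a Beta integral.

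First I would write out $I_{0^+}^{\alpha} t^{\sigma-1} = \frac{1}{\Gamma(\alpha)} \int_0^t (t-s)^{\alpha-1} s^{\sigma-1}\, ds$ for $\alpha > 0$ (the case $\alpha = 0$ is trivial since $I_{0^+}^0$ is the identity). Then I would perform the change of variables $s = t\tau$, with $ds = t\, d\tau$, which factors $t$ out of both the $(t-s)^{\alpha-1}$ and the $s^{\sigma-1}$ terms and pushes it outside the integral as $t^{\alpha+\sigma-1}$. The remaining integral is exactly the Beta function $B(\sigma,\alpha) = \int_0^1 \tau^{\sigma-1}(1-\tau)^{\alpha-1}\, d\tau$, which is well-defined precisely because $\sigma > 0$ and $\alpha > 0$. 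Applying the classical identity $B(\sigma,\alpha) = \Gamma(\sigma)\Gamma(\alpha)/\Gamma(\sigma+\alpha)$, the $\Gamma(\alpha)$ factors cancel and the claimed formula drops out.

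For the derivative identity (if it is part of the statement), I would invoke Definition \ref{def 22} to write $D_{0^+}^{\alpha} t^{\sigma-1} = \frac{d}{dt} I_{0^+}^{1-\alpha} t^{\sigma-1}$, then apply the integral formula just proved (with $\alpha$ replaced by $1-\alpha$), and finally differentiate the resulting monomial $\frac{\Gamma(\sigma)}{\Gamma(\sigma+1-\alpha)} t^{\sigma-\alpha}$ once in $t$, using $\Gamma(\sigma+1-\alpha) = (\sigma-\alpha)\Gamma(\sigma-\alpha)$ to match the claimed form.

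The only real subtlety, which I would flag but not belabor, is verifying that the change of variables and the use of the Beta integral are legitimate at the endpoints: convergence at $\tau = 0$ uses $\sigma > 0$, convergence at $\tau = 1$ uses $\alpha > 0$, and the boundary case $\alpha = 0$ must be handled separately as noted. Everything else is routine.
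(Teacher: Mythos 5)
Your reconstruction of the statement and your Beta-function computation are correct; the paper itself gives no proof, since this lemma is quoted verbatim from Kilbas et al.\ \cite{KL1}, so there is nothing to compare approaches against. The one small mismatch is that the paper's second identity is not the general derivative formula you guessed but the degenerate case $D_{0^{+}}^{\alpha}t^{\alpha-1}=0$ for $0<\alpha<1$ (i.e.\ $\sigma=\alpha$); your own method disposes of it immediately, since $I_{0^{+}}^{1-\alpha}t^{\alpha-1}=\Gamma(\alpha)$ is constant and its derivative vanishes. Everything else is the standard argument and is fine.
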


\begin{equation*}
I_{0^{+}}^{\alpha }t^{\sigma -1}=\frac{\Gamma (\sigma )}{\Gamma (\alpha
+\sigma )}t^{\alpha +\sigma -1},\text{ }t>0
\end{equation*}%
and%
\begin{equation*}
D_{0^{+}}^{\alpha }t^{\alpha -1}=0,\text{ \ \ }0<\alpha <1.
\end{equation*}

\begin{lemma}
\label{th2.4} \cite{FK} Let $0<\alpha <1$, $0\leq \gamma \leq 1$, if $y\in
C_{\gamma }[0,1]$ and $I_{0^{+}}^{1-\alpha }y\in C_{\gamma }^{1}[0,1],$ we
have
\begin{equation*}
I_{0^{+}}^{\alpha }D_{0^{+}}^{\alpha }y(t)=y(t)-\frac{I_{0^{+}}^{1-\alpha
}y(0)}{\Gamma (\gamma )}t^{\alpha -1},\text{ \ for all }t\in (0,1].
\end{equation*}
\end{lemma}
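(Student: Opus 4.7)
The plan is to reduce the identity to the Riemann--Liouville inversion formula (Theorem~\ref{th2.3}), bridged by the fundamental theorem of calculus. Set $\phi(t):=I_{0^{+}}^{1-\alpha}y(t)$. Because $\phi\in C_{\gamma}^{1}[0,1]$ by hypothesis, $\phi$ is classically differentiable on $(0,1]$, and Definition~\ref{def 22} identifies its derivative as $\phi'(t)=D_{0^{+}}^{\alpha}y(t)$; moreover the trace $\phi(0)=I_{0^{+}}^{1-\alpha}y(0)$ exists as a finite limit. The fundamental theorem of calculus then gives
\begin{equation*}
I_{0^{+}}^{1-\alpha}y(t)=I_{0^{+}}^{1-\alpha}y(0)+\int_{0}^{t}D_{0^{+}}^{\alpha}y(s)\,ds=I_{0^{+}}^{1-\alpha}y(0)+I_{0^{+}}^{1}D_{0^{+}}^{\alpha}y(t).
\end{equation*}

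Next I would apply the operator $D_{0^{+}}^{1-\alpha}=\tfrac{d}{dt}\,I_{0^{+}}^{\alpha}$ to both sides and simplify each piece. The left-hand side collapses to $y(t)$ by Theorem~\ref{th2.3}, since $y\in C_{\gamma}[0,1]$. For the constant piece I use Lemma~\ref{th2.3a} (with $\sigma=1$) to compute $D_{0^{+}}^{1-\alpha}(1)=\tfrac{d}{dt}I_{0^{+}}^{\alpha}(1)=\tfrac{d}{dt}\tfrac{t^{\alpha}}{\Gamma(\alpha+1)}=\tfrac{t^{\alpha-1}}{\Gamma(\alpha)}$, so this piece contributes the boundary term of the claimed form. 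For the remaining piece, Theorem~\ref{th2.3'} yields $D_{0^{+}}^{1-\alpha}I_{0^{+}}^{1}=\tfrac{d}{dt}I_{0^{+}}^{\alpha}I_{0^{+}}^{1}=\tfrac{d}{dt}I_{0^{+}}^{1+\alpha}=I_{0^{+}}^{\alpha}$, so this term reduces to $I_{0^{+}}^{\alpha}D_{0^{+}}^{\alpha}y(t)$. Collecting the three pieces and solving for $I_{0^{+}}^{\alpha}D_{0^{+}}^{\alpha}y(t)$ produces the stated identity.

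The main obstacle is the bookkeeping in the weighted space $C_{\gamma}[0,1]$, where the functions in play may have an integrable singularity at $0$. Concretely one must check: (i) that the fundamental theorem of calculus is applicable to $\phi$ on the half-open interval $(0,1]$ with the finite boundary value $\phi(0)$ --- this is exactly what the hypothesis $I_{0^{+}}^{1-\alpha}y\in C_{\gamma}^{1}[0,1]$ encodes; (ii) that Theorem~\ref{th2.3} and the composition rule $\tfrac{d}{dt}I_{0^{+}}^{1+\alpha}=I_{0^{+}}^{\alpha}$ apply to $D_{0^{+}}^{\alpha}y\in C_{\gamma}[0,1]$; and (iii) that $D_{0^{+}}^{1-\alpha}$ may be distributed termwise across the decomposition, which follows from the continuity of $I_{0^{+}}^{\alpha}$ between the relevant weighted spaces. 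None of these steps is completely automatic, but each is a routine consequence of the mapping properties already recorded in Theorems~\ref{th2.3}--\ref{th2.3'} and Lemma~\ref{th2.3a}; once they are in place the calculation above is purely formal.
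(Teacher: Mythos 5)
The paper offers no proof of this lemma --- it is quoted verbatim (up to misprints) from Furati--Kassim \cite{FK} --- so there is no internal argument to compare against; your reconstruction is the standard one and is essentially sound. One point, however, deserves to be made explicit rather than glossed over: your computation of the constant piece gives $D_{0^{+}}^{1-\alpha}\bigl(I_{0^{+}}^{1-\alpha}y(0)\bigr)=\frac{I_{0^{+}}^{1-\alpha}y(0)}{\Gamma(\alpha)}t^{\alpha-1}$, with $\Gamma(\alpha)$ in the denominator, whereas the statement as printed has $\Gamma(\gamma)$. These do not agree, so the term is \emph{not} ``of the claimed form''; what your (correct) calculation actually shows is that the printed $\Gamma(\gamma)$ is a misprint for $\Gamma(\alpha)$. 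This reading is confirmed by how the lemma is used in the proof of Lemma~\ref{lemma 3.1}: there it is invoked with order $\gamma$ in place of $\alpha$, producing the term $\frac{I_{0^{+}}^{1-\gamma}y(0)}{\Gamma(\gamma)}t^{\gamma-1}$, i.e.\ the argument of $\Gamma$ always matches the exponent. You should state this correction rather than silently matching a wrong constant.

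Two smaller caveats on the reduction itself. First, your appeal to Theorem~\ref{th2.3} (with $\alpha$ replaced by $1-\alpha$) to collapse $D_{0^{+}}^{1-\alpha}I_{0^{+}}^{1-\alpha}y$ to $y$ requires $0\leq\gamma<1$, while the lemma as stated allows $\gamma=1$; the endpoint case also threatens the integrability of $\phi'$ needed for your use of the fundamental theorem of calculus, so it is cleaner to restrict to $\gamma<1$ (as \cite{FK} does). Second, Theorem~\ref{th2.3'} as recorded in this paper demands $y\in C_{\gamma}^{1}[0,1]$, but you apply the semigroup property to $D_{0^{+}}^{\alpha}y$, which is only known to lie in $C_{\gamma}[0,1]$; the property does hold at that level of regularity, but you are then relying on the correct version of the semigroup law rather than the one quoted here. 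Neither issue breaks the argument, but both should be acknowledged.
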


\begin{lemma}
\label{th2.5} \cite{FK} Let $y\in C_{\gamma }[0,1].$ If $0\leq \gamma
<\alpha <1$, then%
\begin{equation*}
\underset{t\longrightarrow 0^{+}}{\lim }I_{0^{+}}^{\alpha
}y(t)=I_{0^{+}}^{\alpha }\ y(0)=0.
\end{equation*}
\end{lemma}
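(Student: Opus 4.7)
\medskip

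\noindent\textbf{Proof plan for Lemma \ref{th2.5}.} The plan is to bound the fractional integral by majorizing the integrand using the defining property of the weighted space, and then to compute the resulting auxiliary integral in closed form via Lemma \ref{th2.3a}. Since $y\in C_{\gamma}[0,1]$, by the convention established for the weighted spaces in this paper (obtained from the definition of $C_{1-\gamma}[0,1]$ with $1-\gamma$ replaced by $\gamma$), the function $t\mapsto t^{\gamma}y(t)$ is continuous on $[0,1]$. Consequently there exists a constant $M\geq 0$ such that
\begin{equation*}
|y(s)|\leq M\,s^{-\gamma},\qquad s\in(0,1].
\end{equation*}

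Next I would insert this estimate into the Riemann--Liouville integral. For any $t\in(0,1]$,
\begin{equation*}
\bigl|I_{0^{+}}^{\alpha}y(t)\bigr|
\leq \frac{1}{\Gamma(\alpha)}\int_{0}^{t}(t-s)^{\alpha-1}|y(s)|\,ds
\leq \frac{M}{\Gamma(\alpha)}\int_{0}^{t}(t-s)^{\alpha-1}s^{-\gamma}\,ds.
\end{equation*}
The integral on the right is exactly $\Gamma(\alpha)\,I_{0^{+}}^{\alpha}s^{-\gamma}$ evaluated at $t$. Applying Lemma \ref{th2.3a} with $\sigma=1-\gamma>0$ (this is legitimate since $\gamma<\alpha<1$ implies $1-\gamma>0$) gives
\begin{equation*}
I_{0^{+}}^{\alpha}s^{-\gamma}\bigr|_{s=t}
=\frac{\Gamma(1-\gamma)}{\Gamma(\alpha-\gamma+1)}\,t^{\alpha-\gamma}.
\end{equation*}
Combining these yields
\begin{equation*}
\bigl|I_{0^{+}}^{\alpha}y(t)\bigr|
\leq \frac{M\,\Gamma(1-\gamma)}{\Gamma(\alpha-\gamma+1)}\,t^{\alpha-\gamma}.
\end{equation*}

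Finally, because the hypothesis requires $\gamma<\alpha$, the exponent $\alpha-\gamma$ is strictly positive, so $t^{\alpha-\gamma}\to 0$ as $t\to 0^{+}$. Passing to the limit in the previous inequality gives $\lim_{t\to 0^{+}}I_{0^{+}}^{\alpha}y(t)=0$, which is the asserted statement. I do not anticipate any serious obstacle; the only delicate point is that the hypothesis $\gamma<\alpha$ is precisely what is needed both to guarantee the integrability of $(t-s)^{\alpha-1}s^{-\gamma}$ near $s=0$ (already ensured by $\gamma<1$) and, more critically, to make the power $t^{\alpha-\gamma}$ vanish in the limit. If one only had $\gamma=\alpha$, the bound would become a positive constant and the conclusion would fail, which confirms that the strict inequality in the hypothesis cannot be relaxed.
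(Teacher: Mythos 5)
Your argument is correct. The paper itself gives no proof of this lemma --- it is quoted from the reference \cite{FK} --- and your estimate $|y(s)|\leq M s^{-\gamma}$ followed by the closed-form evaluation of $I_{0^{+}}^{\alpha}t^{-\gamma}$ via Lemma \ref{th2.3a} and the observation that $t^{\alpha-\gamma}\to 0$ is exactly the standard proof found in that source, including the correct identification of $\gamma<\alpha$ as the indispensable hypothesis.
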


\section{\textbf{Existence of positive solution }\label{Sec3}}

In this section we will discuss the existence of positive solution for
equation \ref{equ 1} with condition \ref{equ 2} . Befor starting in prove
our result , we interduce the following conditions:

$(H_{1})$ $f:\left( 0,1\right] \times \left[ 0,\infty \right)
\longrightarrow \left[ 0,\infty \right) $ is continuous such that $f(\cdot
,y(\cdot ))\in C_{1-\gamma }\left[ 0,1\right] $ for any $y\in C_{1-\gamma }%
\left[ 0,1\right] .$

$(H_{2})$ There exist a positive constant $L_{f}$ such that%
\begin{equation*}
\left\vert f(t,x)-f(t,y)\right\vert \leq L_{f}\left\Vert x-y\right\Vert
_{C_{1-\gamma }}.
\end{equation*}

The following lemmas are fundamental to our results.

\begin{lemma}
\label{lem 3} If $Q(t):=\int_{\tau }^{1}(s-\tau )^{\alpha -1}ds$ , for $\tau
\in \left[ 0,1\right] ,$ then
\begin{equation}
\frac{Q(\tau )}{\Gamma (\alpha )}<e.  \label{e3}
\end{equation}
\end{lemma}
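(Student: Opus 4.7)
My plan is to evaluate $Q(\tau)$ in closed form and then estimate the resulting expression. First I would directly integrate:
$$Q(\tau)=\int_{\tau}^{1}(s-\tau)^{\alpha -1}\,ds=\frac{(1-\tau)^{\alpha}}{\alpha},$$
so that
$$\frac{Q(\tau)}{\Gamma(\alpha)}=\frac{(1-\tau)^{\alpha}}{\alpha\,\Gamma(\alpha)}=\frac{(1-\tau)^{\alpha}}{\Gamma(\alpha+1)}.$$
This reduces the inequality to the clean statement $(1-\tau)^{\alpha}/\Gamma(\alpha+1)<e$.

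Next, since $\tau\in[0,1]$ and $\alpha>0$, we have $0\le (1-\tau)^{\alpha}\le 1$. Therefore it suffices to show $1/\Gamma(\alpha+1)<e$, i.e.\ $\Gamma(\alpha+1)>1/e$, for $\alpha\in(0,1]$. This is the only substantive step and will be the main (minor) obstacle: one must rule out that $\Gamma(\alpha+1)$ dips below $1/e$ on $(0,1]$.

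To handle this, I would use the integral representation
$$\Gamma(\alpha+1)=\int_{0}^{\infty}t^{\alpha}e^{-t}\,dt\ \ge\ \int_{1}^{\infty}t^{\alpha}e^{-t}\,dt\ \ge\ \int_{1}^{\infty}e^{-t}\,dt=\frac{1}{e},$$
where the second inequality uses $t^{\alpha}\ge 1$ for $t\ge 1$ and $\alpha\ge 0$. The inequality is strict because $\int_{0}^{1}t^{\alpha}e^{-t}\,dt>0$, so $\Gamma(\alpha+1)>1/e$, and consequently $1/\Gamma(\alpha+1)<e$.

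Combining these pieces,
$$\frac{Q(\tau)}{\Gamma(\alpha)}=\frac{(1-\tau)^{\alpha}}{\Gamma(\alpha+1)}\le \frac{1}{\Gamma(\alpha+1)}<e,$$
which proves \eqref{e3}. The bound is evidently loose (in fact $1/\Gamma(\alpha+1)$ stays close to $1$ for $\alpha\in(0,1]$), but the form $<e$ is presumably chosen because it will match constants appearing later when Schauder's theorem is applied.
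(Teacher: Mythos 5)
Your proof is correct, and it takes a genuinely different route from the paper's. The paper never evaluates $Q(\tau)$ in closed form: after the substitution $s\mapsto s-\tau$ it writes $\Gamma(\alpha)=\int_{0}^{\infty}s^{\alpha-1}e^{-s}\,ds$ and bounds the numerator by inserting $1\le e\,e^{-s}$ on $[0,1-\tau]\subseteq[0,1]$, so that
\begin{equation*}
\int_{0}^{1-\tau}s^{\alpha-1}\,ds\;\le\; e\int_{0}^{1-\tau}s^{\alpha-1}e^{-s}\,ds\;<\;e\,\Gamma(\alpha).
\end{equation*}
You instead compute $Q(\tau)=(1-\tau)^{\alpha}/\alpha$, use $\alpha\Gamma(\alpha)=\Gamma(\alpha+1)$, and reduce everything to the lower bound $\Gamma(\alpha+1)>1/e$, which you get from the tail estimate $\int_{1}^{\infty}t^{\alpha}e^{-t}\,dt\ge\int_{1}^{\infty}e^{-t}\,dt=1/e$. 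Both are one-line integral comparisons that manufacture the constant $e$ from the exponential, but yours passes through the sharper intermediate estimate $Q(\tau)/\Gamma(\alpha)\le 1/\Gamma(\alpha+1)$ (which, as you note, is actually bounded by about $1.13$ on $(0,1]$, so $e$ is far from optimal), and it isolates exactly where a real argument is needed, namely that $\Gamma(\alpha+1)$ does not dip below $1/e$. The paper's version buys brevity: it needs no evaluation of $Q$ and no separate discussion of the minimum of $\Gamma(\alpha+1)$. Since only the crude constant $e$ is used downstream (in Lemma \ref{lemma 3.2} and Theorem \ref{th3.2}), either argument serves equally well.
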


\begin{proof}
According to the definition of gamma function with some simple computation,
we obtain%
\begin{eqnarray*}
\frac{Q(\tau )}{\Gamma (\alpha )} &=&\frac{\int_{\tau }^{1}(s-\tau )^{\alpha
-1}ds}{\int_{0}^{\infty }s^{\alpha -1}e^{-s}ds} \\
&=&\frac{\int_{0}^{1-\tau }s^{\alpha -1}ds}{\int_{0}^{\infty }s^{\alpha
-1}e^{-s}ds}\leq \frac{e\int_{0}^{1-\tau }s^{\alpha -1}e^{-s}ds}{%
\int_{0}^{\infty }s^{\alpha -1}e^{-s}ds}<e.
\end{eqnarray*}
\end{proof}

\begin{lemma}
\label{lemma 3.1} Assume that $Q(\tau ):=\int_{\tau }^{1}(s-\tau )^{\alpha
-1}ds$ for $\tau \in \left[ 0,1\right] ,$ $\mu :=1-\frac{\lambda }{\Gamma
(\gamma +1)}\neq 0,$ $f\in C_{1-\gamma }\left[ 0,1\right] $ and $y\in
C_{1-\gamma }^{\gamma }\left[ 0,1\right] $ exist. A function $y$ is the
solution of
\begin{equation}
D_{0^{+}}^{\alpha ,\beta }y(t)=f(t,y(t)),\text{ }0<t\leq 1,  \label{equ 4}
\end{equation}%
\begin{equation}
I_{0^{+}}^{1-\gamma }y(0)=\lambda \int_{0}^{1}y(s)ds+d,\text{ \ \ \ \ \ \ \ }
\label{equ 5}
\end{equation}

if and only if $y$ satisfies the fractional integral equation%
\begin{equation}
y(t)=\Lambda t^{\gamma -1}+\frac{\lambda t^{\gamma -1}}{\Gamma (\gamma )\mu }%
\int_{0}^{1}\frac{Q(\tau )}{\Gamma (\alpha )}f(\tau ,y(\tau ))d\tau +\frac{1%
}{\Gamma (\alpha )}\int_{0}^{t}(t-s)^{\alpha -1}f(s,y(s))ds,  \label{equ 3.3}
\end{equation}%
where $\Lambda :=\left( \frac{\lambda }{\mu \Gamma (\gamma )\Gamma (\gamma
+1)}+\frac{1}{\Gamma (\gamma )}\right) d.$
\end{lemma}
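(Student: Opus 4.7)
The plan is to prove both directions of the equivalence. For the forward direction ($\Rightarrow$), I would apply the operator $I_{0^+}^{\alpha}$ to both sides of equation~(\ref{equ 4}). Using Lemma~\ref{def8.8}, we have $I_{0^+}^{\alpha} D_{0^+}^{\alpha,\beta} y = I_{0^+}^{\gamma} D_{0^+}^{\gamma} y$, and then Lemma~\ref{th2.4} applied with order $\gamma$ yields
\begin{equation*}
I_{0^+}^{\gamma} D_{0^+}^{\gamma} y(t) = y(t) - \frac{I_{0^+}^{1-\gamma} y(0)}{\Gamma(\gamma)} t^{\gamma-1}.
\end{equation*}
Writing $c := I_{0^+}^{1-\gamma} y(0)$, this gives the preliminary representation
\begin{equation*}
y(t) = \frac{c}{\Gamma(\gamma)} t^{\gamma-1} + \frac{1}{\Gamma(\alpha)} \int_0^t (t-s)^{\alpha-1} f(s, y(s))\, ds.
\end{equation*}

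Next, I would determine $c$ from the integral boundary condition~(\ref{equ 5}). Integrating the representation above over $[0,1]$, using Fubini to swap the order in the double integral, and computing the inner integral $\int_s^1 (t-s)^{\alpha-1} dt = Q(s)$, I obtain
\begin{equation*}
\int_0^1 y(t)\, dt = \frac{c}{\Gamma(\gamma+1)} + \frac{1}{\Gamma(\alpha)} \int_0^1 Q(s)\, f(s, y(s))\, ds.
\end{equation*}
Substituting this into (\ref{equ 5}) and solving the resulting linear equation for $c$ gives $c\mu = d + \frac{\lambda}{\Gamma(\alpha)}\int_0^1 Q(s) f(s,y(s))\, ds$, whence $c = \mu^{-1}\bigl[d + \frac{\lambda}{\Gamma(\alpha)}\int_0^1 Q(s)f(s,y(s))\, ds\bigr]$. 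Plugging this back into the representation and using the identity $\Gamma(\gamma+1) = \gamma\Gamma(\gamma)$ to verify that $d/(\mu\Gamma(\gamma))$ coincides with the stated constant $\Lambda = \bigl(\frac{\lambda}{\mu \Gamma(\gamma)\Gamma(\gamma+1)} + \frac{1}{\Gamma(\gamma)}\bigr) d$ (since $\lambda + \mu\Gamma(\gamma+1) = \Gamma(\gamma+1)$) produces exactly (\ref{equ 3.3}).

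For the converse direction ($\Leftarrow$), I would apply $D_{0^+}^{\alpha,\beta}$ termwise to (\ref{equ 3.3}). The two $t^{\gamma-1}$ terms are annihilated: $D_{0^+}^{\alpha,\beta} t^{\gamma-1} = I_{0^+}^{\beta(1-\alpha)} D_{0^+}^\gamma t^{\gamma-1} = 0$, since $I_{0^+}^{1-\gamma} t^{\gamma-1} = \Gamma(\gamma)$ is a constant, killed by $D = d/dt$. The remaining term $I_{0^+}^{\alpha} f(t,y(t))$ is handled by Theorem~\ref{th2.3}, yielding $D_{0^+}^{\alpha,\beta} y(t) = f(t,y(t))$. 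To verify the boundary condition, I would apply $I_{0^+}^{1-\gamma}$ to (\ref{equ 3.3}), use $I_{0^+}^{1-\gamma} t^{\gamma-1} = \Gamma(\gamma)$ on the first two terms and the semigroup property (Theorem~\ref{th2.3'}) $I_{0^+}^{1-\gamma} I_{0^+}^{\alpha} f = I_{0^+}^{1+\alpha-\gamma} f$ on the third; since $1+\alpha-\gamma > 1-\gamma$, Lemma~\ref{th2.5} gives that this last integral vanishes at $t=0$. Evaluating at $t=0$ then produces $\Lambda \Gamma(\gamma) + \frac{\lambda}{\mu\Gamma(\alpha)}\int_0^1 Q(\tau) f(\tau,y(\tau))\,d\tau$, and a direct check—using the definition of $\mu$ and integrating the previously-found expression for $y$—confirms this equals $\lambda \int_0^1 y(s)\,ds + d$.

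The main obstacle I anticipate is purely bookkeeping: reconciling the two different-looking expressions for the constant multiplying $t^{\gamma-1}$, namely the natural $d/(\mu\Gamma(\gamma))$ produced by the derivation versus the stated $\Lambda$. This reduces to the single algebraic identity $\lambda + \mu\Gamma(\gamma+1) = \Gamma(\gamma+1)$, which follows immediately from the definition $\mu = 1 - \lambda/\Gamma(\gamma+1)$. Beyond this, one must be careful that all uses of the integration-by-parts/composition lemmas are justified in the weighted space $C_{1-\gamma}^\gamma[0,1]$, which is exactly the hypothesis of the lemma.
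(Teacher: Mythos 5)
Your proposal matches the paper's proof in all essentials: the forward direction applies $I_{0^+}^{\alpha}$ together with Lemmas \ref{def8.8} and \ref{th2.4}, then determines the free constant from the integral boundary condition via Fubini (the paper solves for $A=\int_0^1 y(s)\,ds$ where you solve for $c=I_{0^+}^{1-\gamma}y(0)$, an equivalent bookkeeping choice), and the converse verifies the boundary condition by applying $I_{0^+}^{1-\gamma}$ and letting $t\to 0^+$ exactly as the paper does. The only cosmetic difference is that in the converse you apply $D_{0^+}^{\alpha,\beta}$ directly via Theorem \ref{th2.3}, whereas the paper first applies $D_{0^+}^{\gamma}$ so as to extract the regularity of $I_{0^+}^{1-\beta(1-\alpha)}f$ from the hypothesis $y\in C_{1-\gamma}^{\gamma}[0,1]$ before invoking Lemma \ref{th2.4} --- the same verification you defer to your closing remark.
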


\begin{proof}
First, Assume that $y$ satisfies equation (\ref{equ 4}), then by applying $%
I_{0^{+}}^{\alpha }$ on both side of equation (\ref{equ 4}) and use Lemma %
\ref{th2.4}, integral condition, we obtain%
\begin{equation}
y(t)=\frac{\lambda t^{\gamma -1}}{\Gamma (\gamma )}\int_{0}^{1}y(s)ds+\frac{d%
}{\Gamma (\gamma )}t^{\gamma -1}+\frac{1}{\Gamma (\alpha )}%
\int_{0}^{t}(t-s)^{\alpha -1}f(s,y(s))ds.  \label{equ 6a}
\end{equation}

Set $A:=\int_{0}^{1}y(s)ds.$ This the assumption with the equation (\ref{equ
6a}) implies

\begin{equation}
A=\frac{d}{\mu \Gamma (\gamma +1)}+\frac{1}{\mu }\int_{0}^{1}\frac{Q(\tau )}{%
\Gamma (\alpha )}f(\tau ,y(\tau ))d\tau ,  \label{equ 123}
\end{equation}

substituting (\ref{equ 123}) into (\ref{equ 6a}), we attain

\begin{equation*}
y(t)=\Lambda t^{\gamma -1}+\frac{\lambda t^{\gamma -1}}{\Gamma (\gamma )\mu }%
\int_{0}^{1}\frac{Q(\tau )}{\Gamma (\alpha )}f(\tau ,y(\tau ))d\tau +\frac{1%
}{\Gamma (\alpha )}\int_{0}^{t}(t-s)^{\alpha -1}f(s,y(s))ds,
\end{equation*}%
for all $t\in (0,1].$

Conversely, assume that $y$ satisfies (\ref{equ 3.3}). Applying $%
I_{0^{+}}^{1-\gamma }$ to both sides of (\ref{equ 3.3}) yields
\begin{eqnarray*}
I_{0^{+}}^{1-\gamma }y(t) &=&\Lambda \Gamma (\gamma )+\frac{\lambda }{\mu }%
\int_{0}^{1}\frac{Q(\tau )}{\Gamma (\alpha )}f(\tau ,y(\tau ))d\tau \\
&&+\frac{1}{\Gamma (1-\gamma +\alpha )}\int_{0}^{t}(t-s)^{\alpha -\gamma
}f(s,y(s))ds.
\end{eqnarray*}%
Taking the limit at $t\longrightarrow 0^{+}$ of last equality and using
Lemma \ref{th2.5} with $1-\gamma <1-\gamma +\alpha ,$ we get%
\begin{equation*}
I_{0^{+}}^{1-\gamma }y(0)=\Lambda \Gamma (\gamma )+\frac{\lambda }{\mu }%
\int_{0}^{1}\frac{Q(\tau )}{\Gamma (\alpha )}f(\tau ,y(\tau ))d\tau .
\end{equation*}%
From the equation (\ref{equ 123}) with help of the definition of $\Lambda ,$
it follows that the integral boundary conditions given in (\ref{equ 5}) is
satisfied, i.e. $I_{0^{+}}^{1-\gamma }y(0)=\lambda \int_{0}^{1}y(s)ds+d.$

Next , applying $D_{0^{+}}^{\gamma }$ to both sides of (\ref{equ 3.3}) and
using lemmas \ref{def8.8}, \ref{th2.3a}, yields%
\begin{equation}
D_{0^{+}}^{\gamma }y(t)=D_{0^{+}}^{\beta (1-\alpha )}f(t,y(t))  \label{3.6}
\end{equation}

since $y\in C_{1-\gamma }^{\gamma }\left[ 0,1\right] $, by (\ref{a1}), we
have $D_{0^{+}}^{\gamma }y(t)\in C_{1-\gamma }\left[ 0,1\right] ,$ therefore
$D_{0^{+}}^{\beta (1-\alpha )}f=DI_{0^{+}}^{1-\beta (1-\alpha )}f\in
C_{1-\gamma }\left[ 0,1\right] .$ For $f\in C_{1-\gamma }\left[ 0,1\right] $%
, it is clear that $I_{0^{+}}^{1-\beta (1-\alpha )}f\in C_{1-\gamma }^{1}%
\left[ 0,1\right] .$ Consequently, $f$ and $I_{0^{+}}^{1-\beta (1-\alpha )}f$
satisfy Lemma \ref{th2.4}.

Now, \ we apply $I_{0^{+}}^{\beta (1-\alpha )}$to both side of equation (\ref%
{3.6}), then Lemma \ref{th2.4} and definition of Hilfer operator imply that%
\begin{equation*}
D_{0^{+}}^{\alpha ,\beta }y(t)=f(t,y(t))-\frac{I_{0^{+}}^{1-\beta (1-\alpha
)}f(0,y(0))}{\Gamma (\beta (1-\alpha ))}t^{\beta (1-\alpha )-1}
\end{equation*}

By virtue of Lemma \ref{th2.5}, one can obtain
\begin{equation*}
D_{0^{+}}^{\alpha ,\beta }y(t)=f(t,y(t)).
\end{equation*}

This completes the proof.
\end{proof}

\begin{lemma}
\label{lemma 3.2} Assume that $(H_{1})$ and (\ref{e3}) are satisfied$.$ Then
the operator $\Delta :\Omega \longrightarrow \Omega $ defined by
\begin{equation}
\Delta y(t)=\Lambda t^{\gamma -1}+\frac{\lambda t^{\gamma -1}}{\Gamma
(\gamma )\mu }\int_{0}^{1}\frac{Q(\tau )}{\Gamma (\alpha )}f(\tau ,y(\tau
))d\tau +\frac{1}{\Gamma (\alpha )}\int_{0}^{t}(t-s)^{\alpha -1}f(s,y(s))ds
\label{equ 3.1}
\end{equation}%
is compact.
\end{lemma}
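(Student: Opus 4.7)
The goal is to show that $\Delta$ is completely continuous, i.e.\ continuous and mapping bounded subsets of $\Omega$ to relatively compact sets, after which the Arzelà-Ascoli theorem applied in the weighted space $C_{1-\gamma}[0,1]$ yields compactness. Throughout, I would work with the weighted functions $t^{1-\gamma}\Delta y(t)$, because the norm on $C_{1-\gamma}[0,1]$ is the sup norm of this quantity on $[0,1]$.

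First I would fix a bounded set $B_r = \{y\in\Omega:\|y\|_{C_{1-\gamma}}\le r\}$ and set
$$M_r := \sup\bigl\{\,\tau^{1-\gamma}\,f(\tau,y(\tau)) : \tau\in(0,1],\ y\in B_r\,\bigr\},$$
which is finite by $(H_1)$ together with the continuity of $f$ on the relevant weighted set. Writing $f(\tau,y(\tau))\le M_r\tau^{\gamma-1}$, I would bound each of the three summands of $t^{1-\gamma}\Delta y(t)$. The first term is the constant $\Lambda$, the second term is a constant bounded using Lemma~\ref{lem 3} (which gives $Q(\tau)/\Gamma(\alpha)<e$) and the Beta identity $\int_0^1\tau^{\gamma-1}d\tau=1/\gamma$, and the third term is handled by
$$t^{1-\gamma}\,\frac{1}{\Gamma(\alpha)}\int_0^t(t-s)^{\alpha-1}s^{\gamma-1}ds=\frac{\Gamma(\gamma)}{\Gamma(\alpha+\gamma)}\,t^\alpha$$
via Lemma~\ref{th2.3a}, which is bounded uniformly for $t\in[0,1]$. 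Combining these gives $\|\Delta y\|_{C_{1-\gamma}}\le K_r$ for an explicit constant depending on $r,\lambda,d,\alpha,\gamma$. Positivity of $\Delta y$ on $(0,1]$ (hence $\Delta(\Omega)\subset\Omega$) follows because $\Lambda,\lambda,d,\mu^{-1}\ge 0$ and $f\ge 0$.

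The technical heart is equicontinuity of $\{t^{1-\gamma}\Delta y(t):y\in B_r\}$ on $[0,1]$. The first two terms are constants in $t$ after the $t^{1-\gamma}$ multiplication, so they are trivially equicontinuous. For the third term I would take $0\le t_1<t_2\le 1$ and write
$$t_2^{1-\gamma}I_{0^+}^\alpha f(t_2,y(t_2))-t_1^{1-\gamma}I_{0^+}^\alpha f(t_1,y(t_1))$$
as a sum of the integral over $[t_1,t_2]$ plus the integral over $[0,t_1]$ with the difference of kernels $t_2^{1-\gamma}(t_2-s)^{\alpha-1}-t_1^{1-\gamma}(t_1-s)^{\alpha-1}$. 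Using the pointwise bound $f(s,y(s))\le M_r s^{\gamma-1}$ and the Beta-type identity above, each piece can be estimated by an expression of the form $C_r\bigl(t_2^\alpha-t_1^\alpha\bigr)$ plus a tail going to zero uniformly in $y\in B_r$ as $t_2-t_1\to 0$. This uniform bound, independent of $y$, yields equicontinuity. Arzelà-Ascoli then shows $\overline{\Delta(B_r)}$ is compact in $C_{1-\gamma}[0,1]$.

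Finally, continuity of $\Delta$ on $\Omega$ would follow from the continuity of $f$ in its second variable: if $y_n\to y$ in $C_{1-\gamma}[0,1]$, then $f(s,y_n(s))\to f(s,y(s))$ uniformly (on compact subsets of $(0,1]$) and is dominated by $C\,s^{\gamma-1}$, so the Lebesgue dominated convergence theorem transfers the convergence through both integrals in \eqref{equ 3.1}, proving $\Delta y_n\to\Delta y$ in $C_{1-\gamma}[0,1]$. The main obstacle is the equicontinuity estimate, particularly controlling the $(t-s)^{\alpha-1}$ singularity together with the $s^{\gamma-1}$ singularity coming from the weighted bound on $f$; I expect the decomposition above together with Lemma~\ref{th2.3a} to handle this cleanly.
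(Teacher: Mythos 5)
Your proposal follows essentially the same route as the paper's proof: fix a bounded set $B_{r}\subset\Omega$, show $\Delta(B_{r})$ is uniformly bounded, show the weighted images $t^{1-\gamma}\Delta y(t)$ are equicontinuous on $[0,1]$ (observing that the first two summands become constants after multiplication by $t^{1-\gamma}$, so only the Riemann--Liouville integral term needs the splitting at the smaller endpoint), and conclude by the Arzel\`a--Ascoli theorem. The only substantive difference is that you bound $f(s,y(s))$ by $M_{r}s^{\gamma-1}$, which is the bound actually consistent with $(H_{1})$, whereas the paper asserts a flat bound $0<f(t,y(t))\leq\xi$ on $(0,1]\times B_{r}$; your version is the more careful one, and you also make explicit the continuity of $\Delta$ via dominated convergence, which the paper only asserts.
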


\begin{proof}
We know that the operator $\Delta :\Omega \longrightarrow \Omega $ is
continuous, from fact that $f(t,y(t))$ is continuous and nonnegative. Define
bounded set $B_{r}\subset \Omega $ as follows
\begin{equation*}
B_{r}=\left\{ y\in \Omega :\left\Vert y\right\Vert _{C_{1-\gamma }}\leq
r\right\} .
\end{equation*}

The function $f:(0,1]\times B_{r}\longrightarrow
\mathbb{R}
^{+}$ is bounded, then there exist $\xi >0$ such that
\begin{equation*}
0<f(t,y(t))\leq \xi .
\end{equation*}

In view of equation (\ref{equ 3.1}), Lemma \ref{lem 3}, and for all $y\in
B_{r}$, $t\in (0,1],$ we have
\begin{eqnarray*}
&&\left\vert \Delta y(t)t^{1-\gamma }\right\vert \\
&\leq &\Lambda +\frac{\lambda }{\Gamma (\gamma )\mu }\int_{0}^{1}\frac{%
Q(\tau )}{\Gamma (\alpha )}\left\vert f(\tau ,y(\tau ))\right\vert d\tau +%
\frac{t^{1-\gamma }}{\Gamma (\alpha )}\int_{0}^{t}(t-s)^{\alpha
-1}\left\vert f(s,y(s))\right\vert ds \\
&\leq &\Lambda +\frac{\lambda e}{\Gamma (\gamma )\mu }\int_{0}^{1}\left\vert
f(\tau ,y(\tau ))\right\vert d\tau +\frac{t^{1-\gamma }}{\Gamma (\alpha )}%
\int_{0}^{t}(t-s)^{\alpha -1}\left\vert f(s,y(s))\right\vert ds \\
&\leq &\Lambda +\frac{\lambda e\xi }{\Gamma (\gamma )\mu }+\frac{t^{1-\gamma
+\alpha }\xi }{\Gamma (\alpha +1)},
\end{eqnarray*}%
which implies
\begin{equation*}
\left\Vert \Delta y\right\Vert _{C_{1-\gamma }}\leq \left[ \Lambda +\frac{%
\lambda e\xi }{\Gamma (\gamma )\mu }+\frac{\xi }{\Gamma (\alpha +1)}\right] .
\end{equation*}

Thus, $\Delta (B_{r})$ is uniformly bounded.

Next, we prove that $\Delta (B_{r})$ is equicontinuous. Let $y\in B_{r}.$
Then for any $\delta ,\eta \in (0,1]\ $with $0<\delta <\eta \leq 1,$ we have
\begin{eqnarray}
&&\left\vert \eta ^{1-\gamma }\Delta y(\eta )-\delta ^{1-\gamma }\Delta
y(\delta )\right\vert  \notag \\
&=&\left\vert \frac{\eta ^{1-\gamma }}{\Gamma (\alpha )}\int_{0}^{\eta
}(\eta -s)^{\alpha -1}f(s,y(s))ds-\frac{\delta ^{1-\gamma }}{\Gamma (\alpha )%
}\int_{0}^{\delta }(\delta -s)^{\alpha -1}f(s,y(s))ds\right\vert  \notag \\
&\leq &\frac{\eta ^{1-\gamma }-\delta ^{1-\gamma }}{\Gamma (\alpha )}%
\int_{0}^{\delta }\left\vert (\eta -s)^{\alpha -1}-(\delta -s)^{\alpha
-1}\right\vert \left\vert f(s,y(s))\right\vert ds  \notag \\
&&+\frac{\eta ^{1-\gamma }}{\Gamma (\alpha )}\int_{\delta }^{\eta }(\eta
-s)^{\alpha -1}\left\vert f(s,y(s))\right\vert ds  \notag \\
&\leq &\frac{\left[ \eta ^{1-\gamma }-\delta ^{1-\gamma }\right] \xi }{%
\Gamma (\alpha )}\int_{0}^{\delta }\left( (\delta -s)^{\alpha -1}-(\eta
-s)^{\alpha -1}\right) ds  \notag \\
&&+\frac{\eta ^{1-\gamma }\xi }{\Gamma (\alpha )}\int_{\delta }^{\eta }(\eta
-s)^{\alpha -1}ds  \notag \\
&\leq &\frac{\left[ \eta ^{1-\gamma }-\delta ^{1-\gamma }\right] \xi }{%
\Gamma (\alpha +1)}\left[ \left( \delta ^{\alpha }-\eta ^{\alpha }\right)
+(\eta -\delta )^{\alpha }\right] +\frac{\eta ^{1-\gamma }\xi }{\Gamma
(\alpha +1)}(\eta -\delta )^{\alpha }.  \label{1}
\end{eqnarray}

By the classical Mean value Theorem, we have%
\begin{eqnarray}
\delta ^{\alpha }-\eta ^{\alpha } &\leq &\alpha \left( \delta -\eta \right) .
\label{3}
\end{eqnarray}

The last inequality with(\ref{1}) implies
\begin{eqnarray*}
&&\left\vert \eta ^{1-\gamma }\Delta y(\eta )-\delta ^{1-\gamma }\Delta
y(\delta )\right\vert \\
&\leq &\frac{\left[ \eta ^{1-\gamma }-\delta ^{1-\gamma }\right] \xi }{%
\Gamma (\alpha +1)}\left[ \alpha \left( \delta -\eta \right) +(\eta -\delta
)^{\alpha }\right] +\frac{\eta ^{1-\gamma }\xi }{\Gamma (\alpha +1)}(\eta
-\delta )^{\alpha } \\
&\leq &\frac{\left[ \eta ^{1-\gamma }-\delta ^{1-\gamma }\right] \xi }{%
\Gamma (\alpha +1)}(\eta -\delta )^{\alpha }+\frac{\eta ^{1-\gamma }\xi }{%
\Gamma (\alpha +1)}(\eta -\delta )^{\alpha } \\
&\leq &\frac{2\eta ^{1-\gamma }\xi }{\Gamma (\alpha +1)}(\eta -\delta
)^{\alpha }-\frac{\delta ^{1-\gamma }\xi }{\Gamma (\alpha +1)}(\eta -\delta
)^{\alpha }.
\end{eqnarray*}

As $\delta \longrightarrow \eta $ the right-hand side of the preceding
inequality is independent of $y$ and tends to zero. So,
\begin{equation*}
\left\vert \eta ^{1-\gamma }\Delta y(\eta )-\delta ^{1-\gamma }\Delta
y(\delta )\right\vert \longrightarrow 0,\forall \left\vert \eta -\delta
\right\vert \longrightarrow 0.
\end{equation*}%
Hence, $\Delta (B_{r})$ is an equicontinuous set. By the Arzela-Ascoli
theorem we get that $\Delta (B_{r})$ is relatively compact set, which prove
that $\Delta :\Omega \longrightarrow \Omega $ is a compact operator.
\end{proof}

\begin{definition}
\label{def ulc} For any$\ y\in \left[ a,b\right] \subset
\mathbb{R}
^{+},$ we define the upper-control function by
\begin{equation*}
G(t,x)=\sup_{a\leq y\leq x}f(t,y),
\end{equation*}%
and the lower-control function by
\begin{equation*}
g(t,x)=\inf_{x\leq y\leq b}f(t,y).
\end{equation*}%
It is obvious that these functions are nondecreasing on $\left[ a,b\right] ,$
i.e.%
\begin{equation*}
g(t,x)\leq f(t,y)\leq G(t,x).
\end{equation*}
\end{definition}

\begin{definition}
\label{def uls} Let $\overline{y},$ $\underline{y}\in \Omega $ such that $0<%
\underline{y}\leq \overline{y}$ $\leq 1$ satisfy the following Hilfer problem%
\begin{eqnarray*}
D_{0^{+}}^{\alpha ,\beta }\overline{y}(t) &\geq &G(t,x),\text{ }0<t\leq 1 \\
I_{0^{+}}^{1-\gamma }\overline{y}(0) &\geq &\lambda \int_{0}^{1}\overline{y}%
(s)ds+d,
\end{eqnarray*}

or
\begin{equation*}
\overline{y}(t)\geq \Lambda t^{\gamma -1}+\frac{\lambda t^{\gamma -1}}{%
\Gamma (\gamma )\mu }\int_{0}^{1}\frac{Q(\tau )}{\Gamma (\alpha )}G(\tau ,%
\overline{y}(\tau ))d\tau +\frac{1}{\Gamma (\alpha )}\int_{0}^{t}(t-s)^{%
\alpha -1}G(s,\overline{y}(s))ds,
\end{equation*}

and%
\begin{eqnarray*}
D_{0^{+}}^{\alpha ,\beta }\underline{y}(t) &\leq &g(t,x),\text{ }0<t\leq 1 \\
I_{0^{+}}^{1-\gamma }\underline{y}(0) &\leq &\lambda \int_{0}^{1}\underline{y%
}(s)ds+d,
\end{eqnarray*}

or%
\begin{equation*}
\underline{y}(t)\leq \Lambda t^{\gamma -1}+\frac{\lambda t^{\gamma -1}}{%
\Gamma (\gamma )\mu }\int_{0}^{1}\frac{Q(\tau )}{\Gamma (\alpha )}g(\tau ,%
\underline{y}(\tau ))d\tau +\frac{1}{\Gamma (\alpha )}\int_{0}^{t}(t-s)^{%
\alpha -1}g(s,\underline{y}(s))ds.
\end{equation*}

Then the functions $\overline{y}(t)$ and $\underline{y}(t)$ are called the
upper and lower solutions of the Hilfer problem (\ref{equ 1})-(\ref{equ 2}).
\end{definition}

\begin{theorem}
\label{th 3.1} Assume that $(H_{1})$ and (\ref{e3}) hold. Then there exists
at least one positive solution $y(t)\in C_{1-\gamma }[0,1]$ of the Hilfer
problem (\ref{equ 1}),(\ref{equ 2}), such that%
\begin{equation*}
\underline{y}(t)\leq y(t)\leq \overline{y}(t),\text{ \ \ \ \ \ \ \ \ \ \ }%
0<t\leq 1.
\end{equation*}%
where\ $\overline{y}(t)$ and $\underline{y}(t)$ are upper and lower
solutions of Hilfer problem (\ref{equ 1}),(\ref{equ 2}) respectively.

\begin{proof}
In view of Lemma (\ref{lemma 3.1}), the solution of problem (\ref{equ 1})-(%
\ref{equ 2})is given by%
\begin{equation*}
y(t)=\Lambda t^{\gamma -1}+\frac{\lambda t^{\gamma -1}}{\Gamma (\gamma )\mu }%
\int_{0}^{1}\frac{Q(\tau )}{\Gamma (\alpha )}f(\tau ,y(\tau ))d\tau +\frac{1%
}{\Gamma (\alpha )}\int_{0}^{t}(t-s)^{\alpha -1}f(s,y(s))ds
\end{equation*}

Define
\begin{equation*}
\Upsilon =\left\{ x(t):x(t)\in \Omega ,\text{ }\underline{y}(t)\leq x(t)\leq
\overline{y}(t),\text{ }0<t\leq 1\right\}
\end{equation*}%
endowed with the norm $\left\Vert x\right\Vert =\underset{t\in (0,1]}{\max }%
\left\vert x(t)\right\vert ,$ then we have $\left\Vert x\right\Vert \leq b.$
Hence, $\Upsilon $ is a convex, bounded, and closed subset of the Banach
space $C_{1-\gamma }[0,1]$. Now, to apply the Schauder fixed point theorem,
we divide the proof into several steps as follows:

\textbf{Step(1)} We need to prove that\textbf{\ }$\Delta :\Omega
\longrightarrow \Omega $ is compact .

According to Lemma \ref{lemma 3.2}, the operator\textbf{\ }$\Delta :\Omega
\longrightarrow \Omega $ is compact. Since $\Upsilon \subset \Omega ,$ the
operator $\Delta :\Upsilon \longrightarrow \Upsilon $ is compact too.

\textbf{Step(2)} We need to prove that\textbf{\ }$\Delta :\Upsilon
\longrightarrow \Upsilon $. Indeed, by the definitions \ref{def ulc}, \ref%
{def uls}, then for any $x(t)\in \Upsilon $, we have
\begin{eqnarray}
\Delta x(t) &=&\Lambda t^{\gamma -1}+\frac{\lambda t^{\gamma -1}}{\Gamma
(\gamma )\mu }\int_{0}^{1}\frac{Q(\tau )}{\Gamma (\alpha )}f(\tau ,x(\tau
))d\tau +\frac{1}{\Gamma (\alpha )}\int_{0}^{t}(t-s)^{\alpha -1}f(s,x(s))ds
\notag \\
&\leq &\Lambda t^{\gamma -1}+\frac{\lambda t^{\gamma -1}}{\Gamma (\gamma
)\mu }\int_{0}^{1}\frac{Q(\tau )}{\Gamma (\alpha )}G(\tau ,x(\tau ))d\tau +%
\frac{1}{\Gamma (\alpha )}\int_{0}^{t}(t-s)^{\alpha -1}G(s,x(s))ds  \notag \\
&\leq &\Lambda t^{\gamma -1}+\frac{\lambda t^{\gamma -1}}{\Gamma (\gamma
)\mu }\int_{0}^{1}\frac{Q(\tau )}{\Gamma (\alpha )}G(\tau ,\overline{y}(\tau
))d\tau +\frac{1}{\Gamma (\alpha )}\int_{0}^{t}(t-s)^{\alpha -1}G(s,%
\overline{y}(s))ds  \notag \\
&\leq &\overline{y}(t).  \label{equ ali}
\end{eqnarray}%
Also%
\begin{eqnarray}
\Delta x(t) &=&\Lambda t^{\gamma -1}+\frac{\lambda t^{\gamma -1}}{\Gamma
(\gamma )\mu }\int_{0}^{1}\frac{Q(\tau )}{\Gamma (\alpha )}f(\tau ,y(\tau
))d\tau +\frac{1}{\Gamma (\alpha )}\int_{0}^{t}(t-s)^{\alpha -1}f(s,y(s))ds
\notag \\
&\geq &\Lambda t^{\gamma -1}+\frac{\lambda t^{\gamma -1}}{\Gamma (\gamma
)\mu }\int_{0}^{1}\frac{Q(\tau )}{\Gamma (\alpha )}g(\tau ,x(\tau ))d\tau +%
\frac{1}{\Gamma (\alpha )}\int_{0}^{t}(t-s)^{\alpha -1}g(s,x(s))ds  \notag \\
&\geq &\Lambda t^{\gamma -1}+\frac{\lambda t^{\gamma -1}}{\Gamma (\gamma
)\mu }\int_{0}^{1}\frac{Q(\tau )}{\Gamma (\alpha )}g(\tau ,\underline{y}%
(\tau ))d\tau +\frac{1}{\Gamma (\alpha )}\int_{0}^{t}(t-s)^{\alpha -1}g(s,%
\underline{y}(s))ds  \notag \\
&\geq &\underline{y}(t).  \label{equ osa}
\end{eqnarray}

From (\ref{equ ali}) and (\ref{equ osa}), we conclude that $\underline{y}%
(t)\leq \Delta x(t)\leq \overline{y}(t)$, and hence $\Delta x(t)\in \Upsilon
$, for $0<t\leq 1$\ i. e. $\Delta :\Upsilon \longrightarrow \Upsilon .$

In view of the above steps and Schauder fixed point theorem, the problem (%
\ref{equ 1})-(\ref{equ 2}) has at least one positive solution $y(t)\in
\Upsilon $ .
\end{proof}

\begin{corollary}
\label{Cor} Assume that $f:\left( 0,1\right] \times \left[ 0,\infty \right)
\longrightarrow \left[ 0,\infty \right) $ is continuous$,$ and there exist $%
A_{1},A_{2}>0$ such that%
\begin{equation}
A_{1}\leq f(t,y)\leq A_{2},\text{ \ }(t,y)\in (0,1]\times
\mathbb{R}
^{+}.  \label{6}
\end{equation}%
Then the Hilfer problem (\ref{equ 1})-(\ref{equ 2}) has at least one
positive solution $y(t)\in \Upsilon $. Moreover,%
\begin{equation}
\frac{d}{\Gamma (\gamma )}t^{\gamma -1}+\frac{A_{1}}{\Gamma (\alpha +1)}%
t^{\alpha }\leq y(t)\leq \frac{d}{\Gamma (\gamma )}t^{\gamma -1}+\frac{A_{2}%
}{\Gamma (\alpha +1)}t^{\alpha }.  \label{8}
\end{equation}
\end{corollary}
\end{theorem}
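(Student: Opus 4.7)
The plan is to reduce the corollary to Theorem~\ref{th 3.1} by exhibiting explicit upper and lower solutions whose squeeze is exactly the two-sided estimate~(\ref{8}). First I would check hypothesis $(H_1)$: continuity of $f$ together with the uniform bound $A_1 \le f \le A_2$ immediately forces $f(\cdot,y(\cdot)) \in C_{1-\gamma}[0,1]$ for every admissible $y$, so the framework of Section~\ref{Sec3} is in force. The natural candidates, chosen to saturate (\ref{8}), are
\[
\underline{y}(t) = \frac{d}{\Gamma(\gamma)}\,t^{\gamma-1} + \frac{A_1}{\Gamma(\alpha+1)}\,t^{\alpha},\qquad
\overline{y}(t) = \frac{d}{\Gamma(\gamma)}\,t^{\gamma-1} + \frac{A_2}{\Gamma(\alpha+1)}\,t^{\alpha}.
\]

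To verify these meet Definition~\ref{def uls}, I would first evaluate their Hilfer derivatives. Lemma~\ref{th2.3a} gives $D_{0^+}^{\gamma}(t^{\gamma-1})=0$, and then (\ref{R1}) yields $D_{0^+}^{\alpha,\beta}(t^{\gamma-1}) = 0$. Writing $t^{\alpha} = \Gamma(\alpha+1)\,I_{0^+}^{\alpha}(1)$ and applying Theorem~\ref{th2.3} gives $D_{0^+}^{\alpha,\beta}(t^{\alpha}) = \Gamma(\alpha+1)$, so that $D_{0^+}^{\alpha,\beta}\underline{y} \equiv A_1$ and $D_{0^+}^{\alpha,\beta}\overline{y} \equiv A_2$. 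Because $g(t,\cdot)$ and $G(t,\cdot)$ are an infimum and a supremum of values of $f$, both are pinched into $[A_1,A_2]$; in particular $g \ge A_1$ and $G \le A_2$, giving the required differential inequalities $D_{0^+}^{\alpha,\beta}\underline{y} \le g(t,\underline{y})$ and $D_{0^+}^{\alpha,\beta}\overline{y} \ge G(t,\overline{y})$. For the boundary side, Lemma~\ref{th2.3a} yields $I_{0^+}^{1-\gamma}(t^{\gamma-1}) \equiv \Gamma(\gamma)$, while $I_{0^+}^{1-\gamma}(t^{\alpha})$ vanishes as $t\to 0^+$ by Lemma~\ref{th2.5}; hence $I_{0^+}^{1-\gamma}\underline{y}(0) = I_{0^+}^{1-\gamma}\overline{y}(0) = d$. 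The lower inequality $d \le \lambda\int_0^1\underline{y}(s)\,ds + d$ is trivial from $\lambda,\underline{y} \ge 0$, and once both sides are validated, Theorem~\ref{th 3.1} produces $y \in \Upsilon$ with $\underline{y} \le y \le \overline{y}$, which is exactly~(\ref{8}).

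The step I expect to be the main obstacle is the upper boundary inequality $I_{0^+}^{1-\gamma}\overline{y}(0) \ge \lambda\int_0^1\overline{y}(s)\,ds + d$, which reduces to $0 \ge \lambda\int_0^1\overline{y}(s)\,ds$ and therefore holds cleanly only when $\lambda=0$. For strict $\lambda>0$ one must either enlarge $\overline{y}$ by an appropriate $\lambda$-dependent multiple of $t^{\gamma-1}$ (paying for it by relaxing the right-hand side of~(\ref{8})), or bypass the upper/lower solution machinery and derive~(\ref{8}) by inserting $A_1 \le f \le A_2$ pointwise into the representation (\ref{equ 3.3}); this latter route reproduces the stated bounds verbatim precisely in the case $\lambda=0$, where $\Lambda = d/\Gamma(\gamma)$ and the middle $\lambda$-integral drops out, so the corollary is naturally read under that tacit specialization.
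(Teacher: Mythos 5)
Your proposal addresses the Corollary and follows essentially the same route as the paper: take $\underline{y}$ and $\overline{y}$ to be the explicit solutions of the constant problems $D_{0^{+}}^{\alpha ,\beta }y=A_{1}$ and $D_{0^{+}}^{\alpha ,\beta }y=A_{2}$ with $I_{0^{+}}^{1-\gamma }y(0)=d$, check they are lower and upper solutions in the sense of Definition \ref{def uls}, and invoke Theorem \ref{th 3.1}. Your verifications ($D_{0^{+}}^{\alpha ,\beta }t^{\gamma -1}=0$ via Lemma \ref{th2.3a} and (\ref{R1}), $D_{0^{+}}^{\alpha ,\beta }t^{\alpha }=\Gamma (\alpha +1)$ via Theorem \ref{th2.3}, and $I_{0^{+}}^{1-\gamma }\overline{y}(0)=d$) are correct and are in fact more explicit than what the paper writes down. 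The one point where you diverge is the honest one: you observe that the upper-solution requirement $I_{0^{+}}^{1-\gamma }\overline{y}(0)\geq \lambda \int_{0}^{1}\overline{y}(s)ds+d$ (equivalently, the integral form of Definition \ref{def uls}, whose right-hand side carries $\Lambda t^{\gamma -1}$ with $\Lambda >d/\Gamma (\gamma )$ plus a positive $\lambda $-weighted integral) fails for $\lambda >0$. This is a genuine defect of the corollary, not of your argument: the paper's own displayed verification simply omits the $\Lambda t^{\gamma -1}$ and $\lambda $-integral terms, which is legitimate only when $\lambda =0$, and indeed for $\lambda >0$ the representation (\ref{equ 3.3}) shows the upper bound in (\ref{8}) cannot hold with the stated constants. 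Your two proposed repairs (restrict to $\lambda =0$, or enlarge $\overline{y}$ by a $\lambda $-dependent multiple of $t^{\gamma -1}$ and correspondingly weaken (\ref{8})) are both sound; the paper does neither.
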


\begin{proof}
From the Definition \ref{def ulc} and equation (\ref{6}), we have%
\begin{equation}
A_{1}\leq g(t,y)\leq G(t,y)\leq A_{2}.  \label{9}
\end{equation}

Now, we consider the following Hilfer problem
\begin{equation}
D_{0^{+}}^{\alpha ,\beta }\overline{y}(t)=A_{2},\ \ \ \ \ \ \ \ \
I_{0^{+}}^{1-\gamma }\overline{y}(0)=d.  \label{10}
\end{equation}

Then, the Hilfer problem (\ref{10}) has a positive solution%
\begin{eqnarray*}
\overline{y}(t) &=&\frac{t^{\gamma -1}}{\Gamma (\gamma )}I_{0^{+}}^{1-\gamma
}\overline{y}(0)+I_{0^{+}}^{\alpha }A_{2} \\
&=&\frac{d}{\Gamma (\gamma )}t^{\gamma -1}+\frac{A_{2}}{\Gamma (\alpha )}%
\int_{0}^{t}(t-s)^{\alpha -1}ds \\
&=&\frac{d}{\Gamma (\gamma )}t^{\gamma -1}+\frac{A_{2}}{\Gamma (\alpha +1)}%
t^{\alpha }.
\end{eqnarray*}

By (\ref{9}) we conclude that
\begin{equation*}
\overline{y}(t)=\frac{d}{\Gamma (\gamma )}t^{\gamma -1}+\frac{A_{2}}{\Gamma
(\alpha )}\int_{0}^{t}(t-s)^{\alpha -1}ds\geq \frac{d}{\Gamma (\gamma )}%
t^{\gamma -1}+\frac{1}{\Gamma (\alpha )}\int_{0}^{t}(t-s)^{\alpha -1}G(s,%
\overline{y})ds.
\end{equation*}

Thus, the function $\overline{y}(t)$ is the upper solution of the Hilfer
problem (\ref{equ 1})-(\ref{equ 2}).

In the similar way, if the Hilfer problem of the type
\begin{equation}
D_{0^{+}}^{\alpha ,\beta }\underline{y}(t)=A_{1},\text{ \ \ \ \ \ \ \ \ \ }%
I_{0^{+}}^{1-\gamma }\underline{y}(0)=d.  \label{12}
\end{equation}

Obviously, the Hilfer problem (\ref{12}) has also a positive solution
\begin{eqnarray*}
\underline{y}(t) &=&\frac{t^{\gamma -1}}{\Gamma (\gamma )}%
I_{0^{+}}^{1-\gamma }\underline{y}(0)+I_{0^{+}}^{\alpha }A_{1} \\
&=&\frac{d}{\Gamma (\gamma )}t^{\gamma -1}+\frac{A_{1}}{\Gamma (\alpha )}%
\int_{0}^{t}(t-s)^{\alpha -1}ds \\
&=&\frac{d}{\Gamma (\gamma )}t^{\gamma -1}+\frac{A_{1}}{\Gamma (\alpha +1)}%
t^{\alpha }.
\end{eqnarray*}

Similarly, by (\ref{9}) we infer that
\begin{equation*}
\underline{y}(t)=\frac{d}{\Gamma (\gamma )}t^{\gamma -1}+\frac{A_{1}}{\Gamma
(\alpha )}\int_{0}^{t}(t-s)^{\alpha -1}ds\leq \frac{d}{\Gamma (\gamma )}%
t^{\gamma -1}+\frac{1}{\Gamma (\alpha )}\int_{0}^{t}(t-s)^{\alpha -1}g(s,%
\overline{y})ds.
\end{equation*}%
Hence, the function $\underline{y}(t)$ is the lower solution of the Hilfer
problem (\ref{equ 1})-(\ref{equ 2}).

By Theorem (\ref{th 3.1}), we deduce that the problem (\ref{equ 1})-(\ref%
{equ 2}) has at least one positive solution $y(t)\in \Omega $, which
verifies the inequalitiy (\ref{8}).
\end{proof}

\section{Uniqueness of positive solution\label{Sec4}}

In this portion, we will demonstrate the uniqueness of the positive solution
using the Banach contraction principle.

\begin{theorem}
\label{th3.2} Assume that $f:\left( 0,1\right] \times \left[ 0,\infty
\right) \longrightarrow \left[ 0,\infty \right) $ is continuous, the
condition $(H_{2})$ and the inequality (\ref{e3}) hold. If
\begin{equation}
\left( \frac{\lambda e}{\Gamma (\gamma )\mu }+\frac{1}{\Gamma (\alpha +1)}%
\right) L_{f}<1.  \label{e4}
\end{equation}

Then the problem (\ref{equ 1})-(\ref{equ 2}) has a unique positive solution
in $\Upsilon .$
\end{theorem}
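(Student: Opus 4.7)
The plan is to apply the Banach contraction principle to the integral operator $\Delta$ from Lemma \ref{lemma 3.2} restricted to the set $\Upsilon$ defined in the proof of Theorem \ref{th 3.1}. Since $\Upsilon$ is a nonempty, closed, bounded, convex subset of the Banach space $C_{1-\gamma}[0,1]$, it is itself a complete metric space with the inherited norm, so it suffices to show $\Delta(\Upsilon)\subset\Upsilon$ and that $\Delta$ is a strict contraction on $\Upsilon$ with contraction constant
\begin{equation*}
\kappa := \left(\frac{\lambda e}{\Gamma(\gamma)\mu}+\frac{1}{\Gamma(\alpha+1)}\right)L_{f}<1.
\end{equation*}

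First, I would observe that since the Lipschitz condition $(H_2)$ together with continuity of $f$ implies a bound of the form $0\le f(t,y)\le f(t,0)+L_f\|y\|_{C_{1-\gamma}}$, one can produce constants $A_1,A_2$ as in Corollary \ref{Cor} so that the upper/lower solutions constructed there are available. This guarantees that $\Upsilon$ is nonempty and that $\Delta:\Upsilon\to\Upsilon$, exactly as in Step (2) of the proof of Theorem \ref{th 3.1}. So the set-up for Banach's theorem is in place without further work.

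The main computation is the contraction estimate. For $x,y\in\Upsilon$ and $t\in(0,1]$, I would write
\begin{align*}
t^{1-\gamma}\bigl|\Delta x(t)-\Delta y(t)\bigr|
&\le \frac{\lambda}{\Gamma(\gamma)\mu}\int_{0}^{1}\frac{Q(\tau)}{\Gamma(\alpha)}\bigl|f(\tau,x(\tau))-f(\tau,y(\tau))\bigr|\,d\tau \\
&\quad+\frac{t^{1-\gamma}}{\Gamma(\alpha)}\int_{0}^{t}(t-s)^{\alpha-1}\bigl|f(s,x(s))-f(s,y(s))\bigr|\,ds.
\end{align*}
Applying $(H_2)$ to the integrands and then invoking Lemma \ref{lem 3} to bound $Q(\tau)/\Gamma(\alpha)$ by $e$ pulls $L_f\|x-y\|_{C_{1-\gamma}}$ out of each integral. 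The first integral then gives a contribution $\lambda e L_f/(\Gamma(\gamma)\mu)\cdot\|x-y\|_{C_{1-\gamma}}$, and a direct evaluation of $\int_0^t(t-s)^{\alpha-1}ds = t^\alpha/\alpha$ yields $L_f t^{1-\gamma+\alpha}/\Gamma(\alpha+1)\cdot\|x-y\|_{C_{1-\gamma}}$ for the second.

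The subtle point, and really the only potential obstacle, is controlling the factor $t^{1-\gamma+\alpha}$ uniformly on $(0,1]$. Since $\gamma=\alpha+\beta(1-\alpha)$, one has $1-\gamma+\alpha=1-\beta(1-\alpha)\in[\alpha,1]\subset(0,1]$, so $t^{1-\gamma+\alpha}\le 1$ for all $t\in(0,1]$. Taking the maximum over $t$ then delivers $\|\Delta x-\Delta y\|_{C_{1-\gamma}}\le\kappa\|x-y\|_{C_{1-\gamma}}$, and hypothesis (\ref{e4}) says $\kappa<1$. Banach's fixed point theorem then yields a unique fixed point of $\Delta$ in $\Upsilon$, which by Lemma \ref{lemma 3.1} is the unique positive solution of (\ref{equ 1})--(\ref{equ 2}) lying in $\Upsilon$.
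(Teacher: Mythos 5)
Your proof follows essentially the same route as the paper: the identical contraction estimate on $\Upsilon$ (splitting the difference into the two integral terms, bounding $Q(\tau)/\Gamma(\alpha)$ by $e$ via Lemma \ref{lem 3}, applying $(H_{2})$, evaluating $\int_{0}^{t}(t-s)^{\alpha-1}ds$, and using $t^{1-\gamma+\alpha}\le 1$), followed by the Banach contraction principle, with the paper simply citing Theorem \ref{th 3.1} for the set-up on $\Upsilon$ rather than re-deriving it. One caveat on a side remark: the Lipschitz condition $(H_{2})$ yields only the upper bound $A_{2}$ over bounded sets, not a positive lower bound $A_{1}$ as required in Corollary \ref{Cor}, so your claimed derivation of the upper and lower solutions from $(H_{2})$ does not go through as stated --- but this remark is dispensable, since the theorem (like the paper's proof) presupposes the $\Upsilon$ built from the upper and lower solutions of Theorem \ref{th 3.1}.
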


\begin{proof}
According to Theorem (\ref{th 3.1}), the problem (\ref{equ 1})-(\ref{equ 2})
has at least one positive solution in $\Upsilon $ as the form%
\begin{eqnarray*}
y(t) &\longrightarrow &\Delta y(t)=\Lambda t^{\gamma -1}+\frac{\lambda
t^{\gamma -1}}{\Gamma (\gamma )\mu }\int_{0}^{1}\frac{Q(\tau )}{\Gamma
(\alpha )}f(\tau ,y(\tau ))d\tau \\
&&+\frac{1}{\Gamma (\alpha )}\int_{0}^{t}(t-s)^{\alpha -1}f(s,y(s))ds.
\end{eqnarray*}

Now, we need only to proof that the operator $\Delta $ is contraction
mapping on $\Upsilon $. Indeed, for any $y_{1},y_{2}\in \Upsilon $ and $t\in
(0,1],$ we have
\begin{eqnarray*}
&&\left\vert t^{1-\gamma }\Delta y_{1}(t)-t^{1-\gamma }\Delta
y_{2}(t)\right\vert \\
&\leq &\frac{\lambda }{\Gamma (\gamma )\mu }\int_{0}^{1}\frac{Q(\tau )}{%
\Gamma (\alpha )}\left\vert f(\tau ,y_{1}(\tau ))-f(\tau ,y_{2}(\tau
)\right\vert d\tau \\
&&+\frac{t^{1-\gamma }}{\Gamma (\alpha )}\int_{0}^{t}(t-s)^{\alpha
-1}\left\vert f(s,y_{1}(s))-f(s,y_{2}(s))\right\vert ds \\
&\leq &\frac{\lambda e}{\Gamma (\gamma )\mu }\int_{0}^{1}\left\vert f(\tau
,y_{1}(\tau ))-f(\tau ,y_{2}(\tau )\right\vert d\tau \\
&&+\frac{t^{1-\gamma }}{\Gamma (\alpha )}\int_{0}^{t}(t-s)^{\alpha
-1}\left\vert f(s,y_{1}(s))-f(s,y_{2}(s))\right\vert ds \\
&\leq &\frac{\lambda e}{\Gamma (\gamma )\mu }\int_{0}^{1}L_{f}\left\Vert
y_{1}-y_{2}\right\Vert _{C_{1-\gamma }}d\tau +\frac{t^{1-\gamma }}{\Gamma
(\alpha )}\int_{0}^{t}(t-s)^{\alpha -1}L_{f}\left\Vert
y_{1}-y_{2}\right\Vert _{C_{1-\gamma }}ds \\
&\leq &\frac{\lambda eL_{f}}{\Gamma (\gamma )\mu }\left\Vert
y_{1}-y_{2}\right\Vert _{C_{1-\gamma }}+\frac{t^{1-\gamma +\alpha }}{\Gamma
(\alpha +1)}L_{f}\left\Vert y_{1}-y_{2}\right\Vert _{C_{1-\gamma }} \\
&\leq &\left( \frac{\lambda e}{\Gamma (\gamma )\mu }+\frac{1}{\Gamma (\alpha
+1)}\right) L_{f}\left\Vert y_{1}-y_{2}\right\Vert _{C_{1-\gamma }}
\end{eqnarray*}

The hypothesis (\ref{e4}) shows$\ $that $\Delta $ is a contraction mapping.
The conclusion from the Banach contraction principle that the Hilfer problem
(\ref{equ 1})-(\ref{equ 2}) has a unique positive solution $u(t)\in
C_{1-\gamma }\left[ 0,1\right] .$
\end{proof}

\section{\textbf{An example \label{Sec5}}}

Will be provided in the revised submission.

\end{document}